\documentclass[11pt]{amsart}
\usepackage{graphicx}
\usepackage{amsmath,amsthm}
\usepackage{amssymb}
\usepackage{pinlabel}
\usepackage[left = 3cm, right = 3 cm , top = 3 cm , bottom = 3cm ]{geometry}
\usepackage[utf8]{inputenc}
\usepackage{todonotes}

\newtheorem{thm}{Theorem}[section]
\newtheorem{prop}[thm]{Proposition}
\newtheorem{lem}[thm]{Lemma}
\newtheorem{cor}[thm]{Corollary}

\newtheorem{obs}[thm]{Observation}
\newtheorem{question}[thm]{Question}

\theoremstyle{remark}
\newtheorem{remark}[thm]{Remark}

\theoremstyle{definition}
\newtheorem{definition}[thm]{Definition}

\begin{document}

\title{Lower bound for the Perron-Frobenius degrees of Perron numbers}
\author{Mehdi Yazdi}
\date{}
\thanks{Partially supported by NSF Grants DMS-1006553 and DMS-1607374.}

\maketitle

\begin{abstract}
Using an idea of Doug Lind, we give a lower bound for the Perron-Frobenius degree of a Perron number that is not totally-real, in terms of the layout of its Galois conjugates in the complex plane. As an application, we prove that there are cubic Perron numbers whose Perron-Frobenius degrees are arbitrary large; a result known to Lind, McMullen and Thurston. A similar result is proved for biPerron numbers.
\end{abstract}

\section{Introduction}

Let $A$ be a non-negative, integral, \emph{aperiodic} matrix, meaning that some power of $A$ has strictly positive entries. One can associate to $A$ a subshift of finite type with topological entropy equal to $\log(\lambda)$, where $\lambda$ is the spectral radius of $A$. By Perron-Frobenius theorem, $\lambda$ is a \emph{Perron} number \cite{gantmacher2005applications}; a real algebraic integer $ p \geq 1$ is called Perron if it is strictly greater than the absolute value of its other Galois conjugates. Lind proved a converse, namely any Perron number is the spectral radius of a non-negative, integral, aperiodic matrix \cite{lind1984entropies}. As a result, Perron numbers naturally appear in the study of entropies of different classes of maps such as: post-critically finite self-maps of the interval \cite{thurston2014entropy}, pseudo-Anosov surface homeomorphisms \cite{fried1985growth}, geodesic flows, and Anosov and Axiom A diffeomorphisms \cite{lind1984entropies}. 

Given a Perron number $p$, its \emph{Perron-Frobenius degree}, $d_{PF}(p)$, is defined as the smallest size of a non-negative, integral, aperiodic matrix with spectral radius equal to $p$. In other words, the logarithms of Perron numbers are exactly the topological entropies of mixing subshifts of finite type, and the Perron-Frobenius degree of a Perron number is the smallest `size' of a mixing subshift of finite type realising that number. Our main result gives a lower bound for the Perron-Frobenius degree of a Perron number, which is not totally-real. See the related work of Boyle-Lind, which gives an upper bound in the context of non-negative polynomial matrices \cite{boyle2002small}.

\begin{thm}
Let $p>0$ be a Perron number. Assume that some Galois conjugate $p'$ of $p$ is not real, and $ \hspace{3mm} \eta := \tan^{-1} \bigg( \frac{p - \textup{Re}(p')}{|\textup{Im}(p')|} \bigg) \leq 1 $. Then 
\[ d_{PF}(p) \geq \frac{2 \pi}{3 \eta}. \]
\label{lower-perron}
\end{thm}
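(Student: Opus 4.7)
I would reduce the statement to a planar geometry problem about a convex polygon invariant under multiplication by $\mu := p'/p$.

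\emph{Step 1: Reduction to a stochastic matrix.} Let $A$ be a $d\times d$ nonnegative integer aperiodic matrix of minimal size realizing $d = d_{PF}(p)$, with Perron eigenvalue $p$. Writing $v>0$ for the right Perron eigenvector of $A$ and $D=\mathrm{diag}(v_1,\dots,v_d)$, the matrix
\[
S \;:=\; \tfrac{1}{p}\, D^{-1} A\, D
\]
is a primitive row-stochastic $d\times d$ matrix whose eigenvalues are $\lambda/p$ as $\lambda$ ranges over eigenvalues of $A$; in particular $\mu := p'/p$ is an eigenvalue of $S$. Since $\eta$ depends only on the ratio $(p-\textup{Re}(p'))/|\textup{Im}(p')|$, it coincides with the analogous quantity built from the pair $(1,\mu)$. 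So it suffices to lower-bound the size of any primitive stochastic matrix having $\mu$ as an eigenvalue.

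\emph{Step 2: Associated convex polygon.} Let $\zeta=(\zeta_1,\dots,\zeta_d)^T \in \mathbb{C}^d$ be a right $\mu$-eigenvector of $S$, normalized so that $\max_i|\zeta_i|=1=\zeta_{i_0}$ for some index $i_0$. Since each row of $S$ is a probability vector, the eigenvalue equation $\mu\zeta_i = \sum_j S_{ij}\zeta_j$ expresses $\mu\zeta_i$ as a convex combination of the $\zeta_j$'s; in particular $|\zeta_j|\leq 1$ for every $j$. Set
\[
P \;:=\; \mathrm{conv}(\zeta_1,\dots,\zeta_d) \;\subset\; \{z\in\mathbb{C} : |z|\leq 1\}.
\]
Then $P$ is a convex polygon with at most $d$ vertices; $1$ is a vertex of $P$ (being an extreme point of the closed unit disk); and the key invariance $\mu P \subset P$ holds. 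Iterating, the entire forward orbit $\{\mu^k : k\geq 0\}$ of $1$ under multiplication by $\mu$ lies in $P$.

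\emph{Step 3: Planar geometric lemma.} The heart of the argument is to prove: for any convex polygon $P$ with at most $d$ vertices in the closed unit disk, containing $1$ as a vertex and satisfying $\mu P \subset P$ with $\textup{Im}(\mu)>0$ and $\eta(\mu)\leq 1$, one has $\eta(\mu) \geq 2\pi/(3d)$. The intuition is that small $\eta(\mu)$ forces $\mu$ to be close to $1$ with small $\arg\mu$; the orbit $\{\mu^k\}$ then spirals very slowly around the origin near the unit circle. To fit this slow spiral into a polygon with only $d$ vertices, those vertices must be densely packed near $1$, forcing $d \gtrsim 1/\eta$.

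\emph{Main obstacle.} The principal difficulty is obtaining the sharp constant $2\pi/3$. A naive pigeonhole over the first full revolution of the orbit gives only a bound of the shape $d\geq c/\eta$ with an unspecified absolute constant $c$. Obtaining the cleaner $2\pi/(3\eta)$ requires a finer analysis that simultaneously controls the angular advance $\arg\mu$ and the radial decay $|\mu|$ of successive iterates and compares them against the local shape of $P$ at the vertex $1$. The hypothesis $\eta\leq 1$ presumably enters through a small-angle estimate relating $\eta$, $\arg\mu$ and $1-|\mu|$ that holds only up to this threshold.
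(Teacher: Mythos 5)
Your Steps 1 and 2 are correct and in fact give a cleaner reduction than the one in the paper. The paper reaches the same planar problem by a longer route: it invokes Lind's theorem producing integral points $z_1,\dots,z_n$ in $\mathbb{R}^{d_{\mathrm{alg}}(p)}$ with $Bz_i=\sum_j a_{ij}z_j$ for the companion matrix $B$, forms the invariant cone over these points, projects onto the $3$-dimensional invariant subspace $E_p\oplus E_{p'}$, proves a separate lemma that no nonzero integral point projects to zero (to get non-degeneracy), and finally slices the projected cone to obtain a convex polygon with at most $n$ vertices, containing the origin and invariant under multiplication by $t=p'/p$. Your stochastic conjugation $S=\tfrac1p D^{-1}AD$ together with the convex hull of the entries of a $\mu$-eigenvector produces the same polygon directly: the row-stochasticity makes $\mu\zeta_i$ a convex combination of the $\zeta_j$, hence $\mu P\subset P$ by linearity, $0\in P$ since $\mu^k\to 0$ and $P$ is closed, and non-degeneracy is automatic because $1,\mu,\mu^2\in P$ are collinear only if $\mu\in\mathbb{R}$. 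This bypasses Lind's Theorem on integral representatives and the projection lemma entirely, which is a genuine simplification worth noting.

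However, Step 3 is where all of the quantitative content of the theorem lives, and you have not supplied it; you explicitly concede that your pigeonhole argument only yields $d\geq c/\eta$ for an unspecified constant, and that a ``finer analysis is required.'' That finer analysis is precisely Proposition 2.10 of the paper, and it is not a routine refinement. The paper's argument is not a spiral-packing count near the vertex $1$: writing $P_1,\dots,P_M$ for the vertices, $l_j=|OP_j|$, $\phi_j=\angle(P_jOP_{j+1})$, it exploits the telescoping identity $\prod_j (l_{j+1}/l_j)=1$, shows via the invariance $tP_j\in P$ that each vertex angle $\beta_j=\angle(OP_jP_{j+1})$ is at least $\tfrac{\pi}{2}-\eta$, deduces the lower bounds $l_{j+1}/l_j\geq\cos(\eta)/\cos(\phi_j-\eta)$ when $\phi_j\geq\eta$ and $l_{j+1}/l_j\geq\cos(\eta)$ otherwise, and then combines $\sum_j\phi_j=2\pi$ with a convexity (equal-angles) maximization and the small-angle estimates $\cos y\geq 1-y^2/2$, $\sin x\geq x/\sqrt2$ to extract $M\eta\geq 2\pi/3$; the hypothesis $\eta\leq 1$ enters exactly in these last estimates, as you guessed. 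Since your proposal identifies the right target but does not prove it, the proof is incomplete at its essential step.
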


To visualise the angle $\eta$ geometrically, see the left hand side of Figure \ref{Step1} for $t = \frac{p'}{p}$. It was known to Lind, McMullen \cite{McMullen} and Thurston (\cite[Note in Page 6]{thurston2014entropy}) that there are examples of Perron numbers of constant algebraic degree (in fact cubics), whose Perron-Frobenius degrees are arbitrary large. Their proofs are not published to the best of the author's knowledge. As the first application, we give a proof of their result.

\begin{cor}[Lind, McMullen, Thurston]
For any $N>0$, there are cubic Perron numbers whose Perron-Frobenius degrees are larger than $N$. 
\label{cubic}
\end{cor}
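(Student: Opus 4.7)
The plan is to exhibit an explicit family $\{p_n\}_{n \geq 2}$ of cubic Perron numbers to which Theorem~\ref{lower-perron} applies with angle $\eta_n \to 0$, so that the lower bound $d_{PF}(p_n) \geq 2\pi/(3\eta_n)$ diverges and one may then pick $n$ so large that $d_{PF}(p_n) > N$. Intuitively, I want the three Galois conjugates to cluster in a small region near a large real integer, with the real root only just dominant, so that $p_n - \textup{Re}(p_n')$ is small relative to $|\textup{Im}(p_n')|$.

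Concretely, I would consider the depressed cubic $g_n(y) := y^3 + ny - 1$ and its integer translate
\[
f_n(x) = g_n(x - n^2) = x^3 - 3n^2 x^2 + (3n^4 + n) x - (n^6 + n^3 + 1).
\]
Since $g_n$ is monic with $g_n(1) = n$ and $g_n(-1) = -n-2$, it has no rational root and is therefore irreducible over $\mathbb{Q}$; the same then holds for $f_n$. Because $g_n'(y) = 3y^2 + n > 0$, the polynomial $g_n$ is strictly increasing, and plugging in shows that its unique real root $\delta_n$ satisfies $\delta_n \in \bigl( 1/(3n),\, 1/n \bigr)$. Dividing $g_n$ by $y - \delta_n$ yields the complex conjugate roots $-\delta_n/2 \pm i \sqrt{n + 3\delta_n^2/4}$, so $f_n$ has real root $p_n = n^2 + \delta_n$ and non-real Galois conjugates
\[
p_n' = n^2 - \tfrac{\delta_n}{2} \pm i \sqrt{n + \tfrac{3 \delta_n^2}{4}}.
\]

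A short computation, using $\delta_n > 1/(3n)$, yields
\[
p_n^2 - |p_n'|^2 = n \bigl( 3 n \delta_n - 1 \bigr) > 0,
\]
so $p_n$ is a Perron number of algebraic degree exactly $3$. The hypothesis angle of Theorem~\ref{lower-perron} becomes
\[
\eta_n = \tan^{-1} \! \left( \frac{3 \delta_n / 2}{\sqrt{n + 3 \delta_n^2 / 4}} \right) \leq \tan^{-1} \! \left( \frac{3}{2 n^{3/2}} \right),
\]
using $\delta_n < 1/n$. In particular $\eta_n \to 0$ and $\eta_n \leq 1$ for all sufficiently large $n$, so Theorem~\ref{lower-perron} gives $d_{PF}(p_n) \geq 2 \pi / (3 \eta_n) \to \infty$, which completes the argument.

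The main subtlety, rather than obstacle, is balancing two competing constraints in the construction: the translate must be large enough that $p_n$ dominates the moduli of its complex conjugates, yet not so large that $p_n - \textup{Re}(p_n')$ ceases to be small compared to $|\textup{Im}(p_n')|$. The choice of shift $n^2$ accomplishes both, since the Perron inequality $3 n \delta_n > 1$ then follows comfortably from $\delta_n \sim 1/n$, while $p_n - \textup{Re}(p_n') = 3 \delta_n / 2 = O(1/n)$ stays far below $|\textup{Im}(p_n')| \sim \sqrt{n}$.
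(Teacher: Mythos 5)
Your proof is correct, and while it follows the same overall strategy as the paper --- produce a family of cubic Perron numbers whose non-real conjugates make the angle $\eta$ of Theorem \ref{lower-perron} tend to $0$ --- the construction of the family is genuinely different and considerably more economical. The paper starts from the factored polynomial $(c-x)[(a-x)^2+b^2]+1$ with parameters $a,b,c$ subject to inequalities arranged in its Claim 1, and then needs a sequence of perturbation estimates (its Claims 2--7) to locate the roots of the perturbed polynomial well enough to verify irreducibility, the Perron property, and the bound on $\eta$. You instead shift the depressed cubic $y^3+ny-1$ by $n^2$, which lets you read off the roots exactly via Vieta: the real root is $n^2+\delta_n$ with $\delta_n\in(1/(3n),1/n)$, and the complex pair is $n^2-\delta_n/2\pm i\sqrt{n+3\delta_n^2/4}$. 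I checked the key identities: irreducibility follows from the rational root test on $g_n$; the Perron inequality reduces to the exact formula $p_n^2-|p_n'|^2=n(3n\delta_n-1)>0$, which is where the lower bound $\delta_n>1/(3n)$ is used; and $p_n-\textup{Re}(p_n')=3\delta_n/2=O(1/n)$ against $|\textup{Im}(p_n')|\sim\sqrt{n}$ gives $\eta_n=O(n^{-3/2})$. What your route buys is a closed-form, one-parameter family with no perturbation analysis and an explicit rate $\eta_n\leq\tan^{-1}(3/(2n^{3/2}))$, hence an explicit lower bound $d_{PF}(p_n)\gtrsim \frac{4\pi}{9}n^{3/2}$; what the paper's route buys is a two-parameter flexibility (independent control of $|\textup{Im}|$ and of the modulus gap) that it then reuses verbatim, via its Claims 1--7, to produce the biPerron examples of Corollary \ref{biperron}, where the extra condition $|\omega_2|\leq\omega_1-2$ of Observation \ref{perron-to-biperron} must also be arranged.
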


The second application is a similar result for a class of algebraic integers called \emph{biPerron} numbers. A unit algebraic integer $\alpha>1$ is called biPerron, if all other Galois conjugates of $\alpha$ lie in the annulus  $\{ z \in \mathbb{C} \hspace{2mm}| \hspace{2mm} \frac{1}{\alpha}< |z| < \alpha \}$, except possibly for $\alpha^{-1}$.

BiPerron numbers appear in the study of stretch factors of pseudo-Anosov homeomorphisms, in particular \emph{the surface entropy conjecture} (also known as Fried's conjecture). Fried proved that the stretch factor of any pseudo-Anosov homeomorphism on a closed, orientable surface $S$ is a biPerron number \cite{fried1985growth}, and Penner showed the Perron-Frobenius degree of the stretch factor is at most $6 |\chi(S)|$ (see \cite[Page 5]{penner1991bounds}). The strong form of the surface entropy conjecture states that the set of stretch factors of pseudo-Anosov homeomorphisms over all closed, orientable surfaces is exactly the set of biPerron numbers (see \cite[Problem 2]{fried1985growth} or \cite{McMullen}). In \S 3, we prove the following corollary.

\begin{cor}
For any $N>0$, there are biPerron numbers of algebraic degree $\leq 6$, whose Perron-Frobenius degrees are larger than $N$.
\label{biperron}
\end{cor}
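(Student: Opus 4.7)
The plan is to derive bi-Perron examples from the cubic Perron examples of Corollary \ref{cubic} via the substitution $\beta + \beta^{-1} = p$. Given a cubic Perron number $p$, let $\beta>1$ be the larger real root of $x^2 - p x + 1 = 0$. Since $\prod_{i}(x^2 - p_i x + 1)$ over the three Galois conjugates $p_i$ of $p$ is symmetric in the $p_i$, it has integer coefficients and constant term $1$; hence $\beta$ is a unit algebraic integer of degree at most $6$, whose Galois conjugates are the six roots of that polynomial, pairing up as $\{\beta', 1/\beta'\}$ for each conjugate $p'$ of $p$.

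Next I would verify the bi-Perron property, namely that each reciprocal pair $\{\beta', 1/\beta'\}$ lies inside the annulus $\{z : 1/\beta < |z| < \beta\}$. Since the two elements of the pair have reciprocal moduli, this reduces to showing that the one with $|\beta'| \geq 1$ satisfies $|\beta'| < \beta$. The elementary bound $|\beta'| \leq (|p'| + \sqrt{|p'|^2+4})/2$ combined with $|p'| < p$ (the Perron property) and $\beta \sim p$ as $p \to \infty$ forces this inequality once $p$ is large enough.

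The crucial step is relating $\eta_\beta$ to $\eta_p$. Take a sequence of cubic Perron numbers $p_n$ with $\eta_{p_n} \to 0$ from Corollary \ref{cubic}; by a Northcott-style argument, $p_n \to \infty$. Writing $p_n' = a_n + b_n i$ for the complex conjugate of $p_n$ and $\beta_n'$ for the corresponding ``large'' conjugate of $\beta_n$, Taylor expansion in $1/p_n$ and $1/p_n'$ gives $\beta_n = p_n - 1/p_n + O(p_n^{-3})$ and similarly $\beta_n' = p_n' - 1/p_n' + O(|p_n'|^{-3})$. Subtracting,
\[ \beta_n - \beta_n' \;=\; (p_n - p_n')\!\left(1 + \tfrac{1}{p_n p_n'}\right) + O(p_n^{-3}). \]
The factor $1 + 1/(p_n p_n')$ has argument of size $O(p_n^{-2})$, so $\arg(\beta_n - \beta_n')$ differs from $\arg(p_n - p_n')$ by $O(p_n^{-2})$, and consequently $|\eta_{\beta_n} - \eta_{p_n}| = O(p_n^{-2})$. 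Provided $\eta_{p_n}$ does not decay faster than $p_n^{-2}$, one concludes $\eta_{\beta_n} \to 0$, and Theorem \ref{lower-perron} applied to $\beta_n$ then yields $d_{PF}(\beta_n) \geq 2\pi/(3\eta_{\beta_n}) \to \infty$.

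The main obstacle I anticipate is this rate-matching: one must ensure the family $\{p_n\}$ supplied by Corollary \ref{cubic} has $\eta_{p_n}$ decaying slower than the perturbation error $O(p_n^{-2})$. I would expect the natural construction (perturbing cubics with a near-double real root so that one pair of roots becomes complex) to satisfy $\eta_{p_n} \gtrsim 1/p_n$ or similar, making the comparison automatic; in any event, one has the freedom to choose the cubic family so that both $\eta_{p_n} \to 0$ and the substitution error are controlled. Once this calibration is in place, the bi-Perron numbers $\beta_n$ have degree at most $6$ and arbitrarily large Perron--Frobenius degrees, as required.
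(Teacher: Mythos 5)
Your overall route is the same as the paper's: push the cubic family of Corollary \ref{cubic} through the substitution $\beta+\beta^{-1}=p$, check the resulting degree-$\leq 6$ unit is bi-Perron, and show the angle $\eta$ survives the substitution essentially unchanged so that Theorem \ref{lower-perron} still applies. The genuine gap is in your bi-Perron verification. The implication you assert --- that $|p'|<p$ together with $p\to\infty$ forces $|\beta'|<\beta$ --- is false. Quantitatively, $\beta=(p+\sqrt{p^2-4})/2\approx p-1/p$ while your bound gives $|\beta'|\lesssim |p'|+1/|p'|$, so what you actually need is $p-|p'|\gtrsim 1/p+1/|p'|$, i.e.\ a definite separation between $p$ and the moduli of its conjugates, which the Perron property alone does not provide (a conjugate with $|p'|=p-10^{-6}$ and $p=100$ defeats the argument). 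The paper isolates exactly this issue: Observation \ref{perron-to-biperron} demands $|\gamma'|\leq\gamma-2$, and the remark following it exhibits the Perron root of $(x-5)[(x-4)^2+3^2]-1$ as a case where the substitution fails to produce a bi-Perron number. To close the gap you must re-enter the construction of Corollary \ref{cubic} and use the fact that $c$ was arranged to satisfy $c\geq\sqrt{a^2+b^2}+3$, which yields $|\omega_2|\leq\sqrt{a^2+b^2+1}\leq c-2<\omega_1-2$; nothing softer suffices.

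Two smaller remarks. First, the ``rate-matching'' obstacle you flag at the end is not an obstacle at all: you only need $\eta_{\beta_n}\to 0$, and this follows from $\eta_{p_n}\to 0$ together with $|\eta_{\beta_n}-\eta_{p_n}|\to 0$, with no comparison of decay rates required. Second, where you use Taylor expansions in $1/p_n$ and an argument-perturbation estimate (which also tacitly needs $|p_n-p_n'|$ bounded away from $0$, true here because $|\mathrm{Im}(\omega_2)|\geq\sqrt{b^2-1}\to\infty$), the paper gets the same control by three elementary exact inequalities, $\alpha<\omega_1$, $\mathrm{Re}(\alpha')\geq\mathrm{Re}(\omega_2)-1$ and $|\mathrm{Im}(\alpha')|\geq|\mathrm{Im}(\omega_2)|-1$, leading to the clean bound $\tan(\hat\eta)\leq(\omega_1-\mathrm{Re}(\omega_2)+1)/(|\mathrm{Im}(\omega_2)|-1)\leq 16\epsilon$; this is worth adopting, as it removes all asymptotic bookkeeping.
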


We don't know if the examples in the above corollary arise as stretch factors of pseudo-Anosov maps (see Question \ref{PF-degree-question}).

\subsection{Outline} In Section \S 2, we recall the proof of Lind's theorem, and prove Theorem \ref{lower-perron}. In Section \S 3, two applications of the main theorem are proved, namely Corollaries \ref{cubic} and \ref{biperron}. In Section \S 4 further questions are suggested regarding Perron numbers arising as stretch factors of pseudo-Anosov homeomorphisms.

\subsection{Acknowledgement} I would like to thank Doug Lind for suggesting the idea of the proof of Theorem \ref{lower-perron} through a Mathoverflow post, and to thank Doug Lind and Curt McMullen for giving comments on an earlier version of this paper. The author acknowledges the support by a Glasstone Research Fellowship.

\section{Perron-Frobenius degree}

%
%


Given an algebraic integer $\lambda$ of degree $d$ over $\mathbb{Q}$ and minimal polynomial $f(x) = x^d - c_1 x^{d-1}- \dots - c_{d}$, define its companion matrix as 
\[ 
B = \left[\begin{array}{cccc}
0 & 0 & \dots & c_d  \\ 
1 & 0 & \dots &c_{d-1} \\
0 & 1 & \dots & c_{d-2} \\
\vdots & \vdots && \vdots \\
0 & 0 & \dots & c_1 \\
\end{array}\right] .
\]
Note that the characteristic polynomial of $B$ is equal to $f(x)$ up to sign. The Jordan form of $B$ shows that $\mathbb{R}^d$ splits into a direct sum of $1$-dimensional and $2$-dimensional $B$-invariant subspaces corresponding to real roots and pairs of conjugate complex roots of $f(x)$. If $\lambda'$ is a root of $f(x)$, denote the $B$-invariant subspace corresponding to $\lambda'$ by $E_{\lambda'}$, and let $\pi_{\lambda'} \colon \mathbb{R}^d \rightarrow E_{\lambda'}$ be the projection to $E_{\lambda'}$ along the complementary direct sum. As $\lambda$ is real, $E_{\lambda}$ is $1$-dimensional. Fixing a point $w \in E_{\lambda}$, we identify $rw $ with $r$ for $r \in \mathbb{R}$. Let $E$ be the \emph{positive half-space corresponding to $\lambda$}, that is the set of points such that their projection under $\pi_{\lambda}$ is a positive multiple of $w$. By an integral point in $E$ we mean an integral point with respect to the standard basis of $\mathbb{R}^d$.

\begin{thm}[Lind \cite{lind1984entropies}] 
Let $\lambda$ be a Perron number, with the companion matrix $B \colon\mathbb{R}^d \rightarrow \mathbb{R}^d$. Let $E$ be the positive half-space corresponding to $\lambda$. There are integral points $z_1 , \dots , z_n$ in $E$ such that for each $1 \leq i \leq n$, $Bz_i = \sum_{j=1}^{n} a_{ij}z_j$ with $a_{ij} \in \mathbb{N} \cup \{ 0 \}$, and any irreducible component of the matrix $A = [a_{ij}]$ is an aperiodic matrix whose spectral radius is equal to $\lambda$.
\label{Lind}
\end{thm}

The next theorem, also due to Lind, gives a converse to the previous theorem.

\begin{thm}[Lind \cite{lind1984entropies}] Let $\lambda$ be a Perron number, with the companion matrix $B \colon\mathbb{R}^d \rightarrow \mathbb{R}^d$. Let $E$ be the positive half-space corresponding to $\lambda$. If $A$ is an $n \times n$ aperiodic, non-negative, integral matrix with spectral radius equal to $\lambda$, then there are integral points $z_1 , \dots , z_n \in E$ such that for each $1 \leq i \leq n$ we have $Bz_i = \sum_{j=1}^{n} a_{ij}z_j$.
\label{Lindconverse}
\end{thm}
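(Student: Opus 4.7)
My plan is to produce the $z_i$'s by taking the coordinates of a Perron right-eigenvector of $A$, viewed as elements of the number field $K = \mathbb{Q}(\lambda)$, and then transported to $\mathbb{Z}^d \subset \mathbb{R}^d$ through the standard identification of $K$ with $\mathbb{Q}^d$ via the $\mathbb{Q}$-basis $\{1, \lambda, \ldots, \lambda^{d-1}\}$. The whole argument is essentially the observation that this identification is tailor-made so that multiplication by $\lambda$ becomes the companion matrix $B$, after which the required intertwining is just the eigenvector equation.

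The first step is to exhibit a $\lambda$-eigenvector of $A$ with coordinates in $K$. Because $A$ is aperiodic and non-negative with spectral radius $\lambda$, the Perron--Frobenius theorem guarantees that $\lambda$ is a simple eigenvalue of $A$ with a strictly positive real right-eigenvector. Simplicity forces $\ker(A - \lambda I)$ to be one-dimensional even when the matrix is regarded over $K$, so I can pick a nonzero $v \in K^n$ with $Av = \lambda v$. Under the real embedding $K \hookrightarrow \mathbb{R}$ sending $\lambda \mapsto \lambda$, the vector $v$ becomes a real $\lambda$-eigenvector, hence a scalar multiple of the Perron eigenvector, so after negating if necessary its entries are all positive as real numbers. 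Multiplying $v$ by a suitable positive integer I can further arrange $v \in \mathbb{Z}[\lambda]^n$ without destroying positivity.

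Next I transfer everything to $\mathbb{R}^d$. Under the identification $K \cong \mathbb{Q}^d$ given by $\{1, \lambda, \ldots, \lambda^{d-1}\}$, the lattice $\mathbb{Z}[\lambda]$ corresponds to $\mathbb{Z}^d$, and the $\mathbb{Q}$-linear endomorphism ``multiplication by $\lambda$'' on $K$ is precisely the companion matrix $B$ acting on $\mathbb{Q}^d$: this is the defining property of $B$. After tensoring with $\mathbb{R}$, the decomposition of $K \otimes_\mathbb{Q} \mathbb{R}$ according to archimedean places picks out $E_\lambda$ as the real line associated to the embedding $\lambda \mapsto \lambda$, and the projection of $x \in K \subset \mathbb{R}^d$ onto $E_\lambda$ is simply the real number obtained by evaluating $x$ at $\lambda$. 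Consequently $x \in E$ if and only if $x$ is positive as a real number under this embedding.

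With these identifications in place I set $z_i := v_i \in \mathbb{Z}[\lambda] \subset \mathbb{R}^d$. Integrality $z_i \in \mathbb{Z}^d$ follows from $v_i \in \mathbb{Z}[\lambda]$, and $z_i \in E$ follows from $v_i > 0$ as a real number. The required relation is then a one-line verification: under the identification, $B z_i$ is $\lambda v_i$ in $K$, and
\[
\lambda v_i \;=\; (Av)_i \;=\; \sum_{j=1}^{n} a_{ij}\, v_j \;=\; \sum_{j=1}^{n} a_{ij}\, z_j.
\]
The only place that demands care is setting up the identifications---in particular recognizing that $E$ really is the image of the positive elements of $K$ under the real embedding into $\mathbb{R}^d$---but this is bookkeeping rather than a substantive obstacle.
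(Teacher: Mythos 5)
Your proposal is correct and follows essentially the same route as the paper: take a positive $\lambda$-eigenvector of $A$ with entries in $\mathbb{Q}(\lambda)$, transport it to $\mathbb{Z}^d$ via the basis $\{1,\lambda,\dots,\lambda^{d-1}\}$ (under which multiplication by $\lambda$ is $B$), and identify membership in $E$ with positivity under the real embedding. The only cosmetic difference is that you justify the last identification via the archimedean decomposition of $K\otimes_{\mathbb{Q}}\mathbb{R}$, while the paper does it with the left eigenvector $v^*=(1,\lambda,\dots,\lambda^{d-1})$ and an explicit sign adjustment, which is the same bookkeeping you flag.
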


\noindent We recall Lind's proof of Theorem \ref{Lindconverse}. 

\begin{proof}
Consider $A: \mathbb{R}^n \longrightarrow \mathbb{R}^n$. By Perron-Frobenius theory, there is a \underline{positive} eigenvector $v \in \mathbb{R}^n$ corresponding to $\lambda$. By working over the field $\mathbb{Q}(\lambda)$, we can assume that $v \in \mathbb{Q}(\lambda)^n$. Let 
\[ v=(v_1, v_2, \dots , v_n)^T \in \mathbb{R}^n, \]
and assume that for $1 \leq i \leq n$:
\[ v_i = z_{i1}+z_{i2}\lambda+ \dots + z_{id}\lambda^{d-1} >0, \]
where the numbers $z_{ij}$ are integers. Define for $1 \leq i \leq n$:
\[ z_i = (z_{i1}, z_{i2}, \dots , z_{id})^T \in \mathbb{Z} ^d. \]
Since $v$ is an eigenvector for $A$
\[ \lambda v_i = (Av)_i = \sum_{j} a_{ij} v_j  \hspace{4mm}(*). \]
Let $\Psi : \mathbb{Q}(\lambda) \longrightarrow \mathbb{Q}^d$ be the map:
\[ \Psi(a_0 + a_1 \lambda + \dots + a_{d-1}\lambda^{d-1}) = (a_0 , \dots , a_{d-1})^T.  \]
In particular, $z_i = \Psi(v_i)$ for $1 \leq i \leq n$. Taking $\Psi$ from both side of the equation $(*)$ gives us: 
\[ \Psi(\lambda v_i) = \sum_{j} a_{ij} \Psi(v_j). \]
Note that multiplication by $\lambda$ on $\mathbb{Q}(\lambda)$ has matrix $B$ with respect to the basis $\{1,  \lambda, \dots , \lambda^{d-1} \}$. Hence, we obtain:
\[ Bz_i = \sum_{j} a_{ij}z_j .\]
Finally, we need to verify that the points $z_i$ belong to the positive half-space $E$. Note that 
\[ w^* = (1, \lambda, \dots, \lambda^{d-1}) \in \mathbb{R}^d, \]
is a \underline{left} eigenvector for the linear map $B$ corresponding to the eigenvalue $\lambda$. Let $E_\lambda$ be the one-dimensional invariant subspace of $\mathbb{R}^d$ corresponding to $\lambda$ and $C$ be its invariant complement. Therefore,  
\[ C = \{ x \in \mathbb{R}^d \hspace{2mm}| \hspace{2mm} w^*x  =0 \} . \]
Let $\pi_\lambda$ be the projection map from $\mathbb{R}^d$ onto $E_\lambda$ along the complementary direct sum. Define a map $m_{w^*}:  \mathbb{R}^d \longrightarrow \mathbb{R}$ that is multiplication by $w^*$ from the left. Then $m_{w^*}$ should be a multiple of the map $\pi_\lambda$. On the other hand 
\[  m_{w^*}(z_i) = w^*z_i  = z_{i1}+ z_{i2} \lambda+ \dots + z_{id} \lambda^{d-1}=v_i >0. \]
Hence replacing each $z_i$ by $-z_i$ if necessary (in case $m_{w^*}$ is a negative multiple of $\pi_\lambda$), we have $z_i \in E$ for each $i$ and the proof is complete.
\end{proof}

\begin{remark}
$\mathbb{R}^d$ can be identified with $\mathbb{Q}(\lambda)\otimes_{\mathbb{Q}} \mathbb{R}$. Multiplication by $\lambda$ is a linear map on $\mathbb{Q}(\lambda)\otimes_{\mathbb{Q}} \mathbb{R}$ which has the matrix $B$ with respect to the basis $\{ 1, \lambda, \cdots, \lambda^{d-1}\}$. Therefore an integral point in the standard basis of $\mathbb{R}^d$ can be considered as a point in $\mathbb{Z}[\lambda]$.
\end{remark}

The following lemma and propositions will be used in the proof of Theorem \ref{lower-perron}.

\begin{lem}
Let $\lambda$ be a Perron number, with the companion matrix $B \colon\mathbb{R}^d \rightarrow \mathbb{R}^d$. Let $\delta$ be a Galois conjugate of $\lambda$, and $\pi_{\delta}$ be the projection onto $E_{\delta}$ along the complementary direct sum. Then for any non-zero integral point $z \in \mathbb{R}^d$, $\pi_{\delta}(z) \neq 0$. 
\label{projection}
\end{lem}

\begin{proof}
 

Assume the contrary that $\pi_{\delta}(z) =0$. Therefore $z$ lies in the invariant complementary direct sum of $E_{\delta}$ in $\mathbb{R}^d$. Set $z = (x_1 , \cdots, x_d)^T \in \mathbb{Z}^d$. Working in the complexification $\mathbb{R}^d \otimes_{\mathbb{R}} \mathbb{C}$ of $\mathbb{R}^d$, we obtain that $w^* z = 0$ where $ w^* = [1, \delta, \cdots, \delta^{d-1}]$
is a \underline{left} eigenvector for $B$ corresponding to the eigenvalue $\delta$. Therefore
\[ x_1+ x_2 \delta + \dots + x_d \delta^{d-1} =0. \]
However, this means that $\delta$ satisfies an integral polynomial equation with degree less than $d$. Therefore all $x_j$ should be zero. This contradicts the fact that $z \in E$. 
\end{proof}

The idea of using the next proposition has been suggested generously by Douglas Lind in the Mathoverflow post https://mathoverflow.net/questions/228826/lower-bound-for-perron-frobenius-degree-of-a-perron-number. In this post, the author had asked for a way of finding a lower bound for the Perron-Frobenius degree of a Perron number. This was the answer that Lind gave:

``If a Perron number $\lambda$ has negative trace, then any Perron-Frobenius matrix must have size strictly greater than the algebraic degree of $\lambda$, for example the largest root of $x^3+3x^2-15x-46$. If $B$ denotes the $d \times d$ companion matrix of the minimal polynomial of $\lambda$ (which of course can have negative entries), then $\mathbb{R}^d$ splits into the dominant 1-dimensional eigenspace $D$ and the direct sum $E$ of all the other generalized eigenspace. 

Although I've not worked this out in detail, roughly speaking the smallest size of a Perron-Frobenius matrix for $\lambda$ should be at least as large as the smallest number of sides of a polyhedral cone lying on one side of $E$ (positive $D$-coordinate) and invariant (mapped into itself) under $B$. This is purely a geometrical condition, and there are likely further arithmetic constraints as well. For example, if $\lambda$ has all its other algebraic conjugates of roughly the same absolute value, then $B$ acts projectively as nearly a rotation, and this forces any invariant polyhedral cone to have many sides, so the geometric lower bound will be quite large.'' 

The following proposition is only one way of using the above idea and it would be nice to weaken the geometric assumptions about the roots or to explore the arithmetic constraints that Lind mentions.

\begin{prop}
Let $\hat{B} : \mathbb{R}^3 \longrightarrow \mathbb{R}^3$ be a linear map. Assume that the eigenvalues of $\hat{B}$ are $\lambda$, $\delta$ and $\theta$ such that
\begin{enumerate}
\item $\lambda >1$ is a positive real number, and $\delta, \theta$ is a pair of conjugate complex numbers with non-zero imaginary parts and positive real parts, and $|\delta|<\lambda$.
\item If we set $t = \frac{\delta}{\lambda}$, then
$ \hspace{2mm} \eta:=|\frac{1 - \textup{Re}(t)}{\textup{Im}(t)} | \leq 1$. 
\end{enumerate}

Define $E$ as the positive half-space corresponding to the eigenvalue $\lambda$. Let $M$ be the minimum number of sides for an arbitrary non-degenerate polygonal cone $\hat{\mathcal{C}}  \subset E$ that is invariant under the map $\hat{B}$, i.e., $ \hat{B}(\hat{\mathcal{C}}) \subset \hat{\mathcal{C}}$.
Then $M  \geq \frac{2\pi}{3\eta}$. 
\label{invariantcone}
\end{prop}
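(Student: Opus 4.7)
The plan is to reduce Proposition~\ref{invariantcone} to a two-dimensional question about a convex polygon invariant under a complex multiplication, and then to exploit the angular constraints forced by the invariance. The reduction uses the $\hat{B}$-invariant splitting $\mathbb{R}^3 = E_\lambda \oplus W$, where $W$ is the real $2$-plane on which $\hat{B}$ has spectrum $\{\delta, \bar{\delta}\}$; I would identify $W$ with $\mathbb{C}$ via a basis in which $\hat{B}|_W$ acts as multiplication by $\delta$. Each extreme ray of $\hat{\mathcal{C}} \subset E$ meets the affine slice $\{x \in \mathbb{R}^3 : v^{*}x = 1\}$ (where $v^{*}$ is the left $\lambda$-eigenfunctional) in a single point, so the slice of $\hat{\mathcal{C}}$ is a convex polygon $Q \subset \mathbb{C}$ with $M$ vertices. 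A computation parallel to Lind's proof of Theorem~\ref{Lindconverse} shows that the induced projective action of $\hat{B}$ on this slice is multiplication by $t = \delta/\lambda$, so $\hat{B}(\hat{\mathcal{C}}) \subset \hat{\mathcal{C}}$ translates into $tQ \subset Q$. Since $|t| < 1$, the nested iterates $t^k Q$ shrink to $\{0\}$, forcing $0 \in Q$; non-degeneracy of the cone gives $0$ in the interior of $Q$.

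The next step is to extract the key angular constraint from a single edge. Label the vertices counterclockwise at polar angles $\theta_1 < \cdots < \theta_M$ and let $\psi_i$ be the argument of the outward normal $n_i$ to the edge $e_i = [v_i, v_{i+1}]$; since $0$ is interior, $\theta_i < \psi_i < \theta_{i+1}$. Working in the rotated frame in which $n_i$ is the positive real axis, so that $v_i = d_i + i y_i$ with $d_i > 0$ the distance from $0$ to the edge line, the condition $t v_i \in Q$ forces $\textup{Re}(t v_i) \leq d_i$, that is
\[
|t|\bigl(d_i \cos\phi - y_i \sin\phi \bigr) \leq d_i.
\]
Since $\sin\phi > 0$, the definition of $\eta$ simplifies this to $y_i \geq -d_i \eta$, equivalently $\psi_i - \theta_i \leq \arctan \eta$. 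In other words, the outward normal of each edge stays within angle $\arctan\eta$ of the preceding vertex.

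The final step is to promote this one-sided estimate into a bound on the full angular span $\omega_i = \theta_{i+1} - \theta_i$ of every edge, and here is where the main difficulty lies. The plan is to couple the inequality above with the dual support-function condition $|t|\, h_Q(\theta - \phi) \leq h_Q(\theta)$ evaluated at $\theta = \psi_{i+1}$, and with the elementary estimate $\phi \leq 2\arctan\eta$ (equivalent to $\tan(\phi/2) \leq \eta$, which itself follows from $|t| \leq 1$). Combined with the hypothesis $\eta \leq 1$, which allows one to replace $\arctan\eta$ by $\eta$, these inputs should produce the per-edge bound $\omega_i \leq 3\eta$. Summing over the $M$ edges then gives $2\pi = \sum_i \omega_i \leq 3\eta M$, hence $M \geq 2\pi/(3\eta)$. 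The main obstacle is this last step: the per-edge constraint obtained from the image of a single vertex is one-sided, and controlling the complementary angular gap $\theta_{i+1} - \psi_i$ requires exploiting the full invariance $tQ \subset Q$ rather than its effect on one vertex at a time.
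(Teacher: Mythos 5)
Your reduction to the planar problem is exactly the paper's: slice the cone by an affine plane transverse to the $\lambda$-eigendirection, observe that the induced action on the slice is multiplication by $t=\delta/\lambda$, and conclude that the resulting $M$-gon is invariant under $z\mapsto tz$ and contains the origin. Your ``key angular constraint'' is also correct and is precisely the paper's Claim~1 in the proof of Proposition~\ref{convex}: requiring $tP_j\in\mathcal{P}$ to lie on the inner side of the edge through $P_j$ forces $\angle(OP_jP_{j+1})\geq \frac{\pi}{2}-\eta$, which is the same as your statement that the outward normal of the edge lies within angle $\arctan\eta$ of the direction of the preceding vertex.

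The gap is everything after that, and it is the heart of the proof. A uniform per-edge bound $\omega_i=\theta_{i+1}-\theta_i\leq 3\eta$ is not established by your argument, and there is good reason to doubt that any purely local (edge-by-edge) estimate of this kind can work: the one-sided constraint above is compatible with a single edge subtending an angle as large as nearly $\frac{\pi}{2}+\eta$ at the origin, provided the polygon's radius grows sharply across that edge (think of the convex hull of an orbit $\{t^nz\}$, whose closing edge behaves differently from the others). The paper's mechanism for ruling this out is global and multiplicative: setting $l_j=|OP_j|$, the product $\prod_j l_{j+1}/l_j$ equals $1$ around the polygon, while Claim~1 forces the pointwise lower bounds $l_{j+1}/l_j\geq \cos(\eta)/\cos(\phi_j-\eta)$ when $\phi_j\geq\eta$ and $l_{j+1}/l_j\geq\cos(\eta)$ otherwise. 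So a large subtended angle $\phi_j$ forces radial growth, and the telescoping identity means such growth must be paid back elsewhere; a convexity argument (replacing the $\phi_j$ by their average, using $\sum_j\phi_j=2\pi$) plus elementary estimates on $\cos$ and $\sin$ then yields $M\eta\geq 2\pi/3$. This averaging-over-the-whole-polygon step is the idea missing from your proposal; your dual support-function condition $|t|\,h_Q(\theta-\phi)\leq h_Q(\theta)$ could in principle play the role of the radial bookkeeping, but as written you have not extracted from it any control on the gap $\theta_{i+1}-\psi_i$, and you acknowledge as much. (A smaller point: non-degeneracy of the cone gives that $Q$ is two-dimensional and the contraction argument gives $0\in Q$, but not that $0$ is interior; the paper's argument only needs $0\in\mathcal{P}$.)
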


\begin{proof}
Let $\hat{\mathcal{C}}$ be an invariant polygonal cone for the map $\hat{B}$ with $M$ sides. Let $E_\lambda$ be the 1-dimensional invariant subspace in $\mathbb{R}^3$ corresponding to $\lambda$. Pick an eigenvector $w \in E$ corresponding to the eigenvalue $\lambda$, and let $H$ be the set of points whose projection under $\pi_{\lambda}$ is the constant vector $w$. Define 
\[ \mathcal{P}:= \hat{\mathcal{C}} \cap H. \]
Hence, $\mathcal{P}$ is a polygon with $M$ sides and $\hat{\mathcal{C}}$ is the cone over $\mathcal{P}$ (Figure \ref{cone}). 

\begin{figure}
\labellist
\pinlabel $\mathcal{P}$ at 130 150
\pinlabel $\hat{\mathcal{C}}$ at 190 80
\endlabellist
\centering 
\includegraphics[width= 1.5 in]{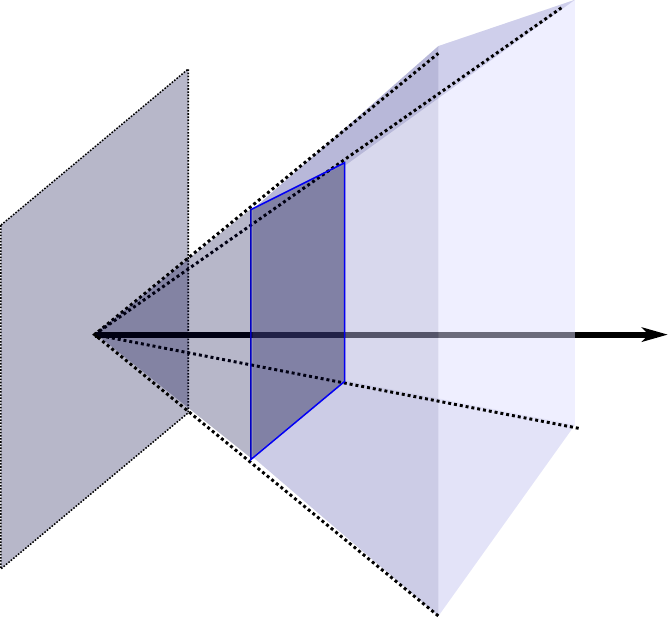}
\caption{The cone $\hat{\mathcal{C}}$ over the polygon $\mathcal{P}$.}
\label{cone}
\end{figure}

Let $G$ be the 2-dimensional invariant subspace corresponding to $\delta$. One can think about $G$ as the complex plane with the action of $\hat{B}$ on $G$ being the multiplication by the complex number $\delta$. 


Now if $w + w'$ is a vector in $H$ where $w' \in G$, then 
\[ \hat{B}(w+w') = \lambda w + \delta w' = \lambda(w+ \frac{\delta}{\lambda}w'). \]
Here by $\delta w'$ we mean multiplication by $\delta$ inside the complex plane $G$. Note that $w+ \frac{\delta}{\lambda}w' \in H$, hence the action of $\hat{B}$ on $H$ is the multiplication by the complex number $t = \frac{\delta}{\lambda}$. As a corollary, the polygon $\mathcal{P}$ is invariant under multiplication by $t = \frac{\delta}{\lambda}$. Note that $0 \in \mathcal{P}$ since $|t|= \frac{|\delta|}{\lambda}<1$ and successive multiplication by $t$ converges to the origin in $H$ (that is the intersection point $E_\lambda \cap H$). Now if we set $\eta =|\tan^{-1}(\frac{1 - \textup{Re}(t)}{\textup{Im}(t)}) |$, by Proposition \ref{convex} we have $M  \geq \frac{2\pi}{3\eta}$. 

Note this proposition is purely geometric and $\lambda$ does not need to be an algebraic integer.
\end{proof}

\begin{prop}
Let $\mathcal{P}$ be a convex non-degenerate polygon in the complex plane, having $M$ sides and containing the origin. Let $t$ be a complex number with non-zero imaginary part and positive real part. Assume that $\mathcal{P}$ is invariant under multiplication by $t$, i.e., $ t \mathcal{P} \subset \mathcal{P}$.
If $ \eta := |\tan^{-1}\bigg(\frac{1 -\textup{Re}(t)}{\textup{Im}(t)}\bigg)| \leq 1 $, then $ M \geq \frac{2 \pi}{3\eta}$.
\label{convex}
\end{prop}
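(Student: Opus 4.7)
The plan is to obtain $M \geq 2\pi/(3\eta)$ by iterating a combinatorial map on the vertices of $\mathcal{P}$ and bounding the angular advance per iteration, combined with a sharp trigonometric inequality. First I would normalize: without loss of generality $\mathrm{Im}(t) > 0$, so $t = re^{i\theta}$ with $r \in (0, 1]$ and $\theta \in (0, \pi)$. Applying the law of sines to the triangle with vertices $0$, $1$, $t$ yields $r = \cos\eta/\cos(\theta - \eta)$, which forces $\theta \in (0, 2\eta]$. Setting $\mu := \cos^{-1}(r)$, the key trigonometric bound I would establish is $\theta + \mu \leq 3\eta$ for $\eta \leq 1$; using the product-to-sum identity on $\cos\mu \cos(\theta - \eta) = \cos\eta$, this reduces to $\cos 2\eta \leq 2\cos\eta - 1$, equivalently $\cos\eta(1 - \cos\eta) \geq 0$, which is immediate on $[0, \pi/2]$.

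Next I would define a combinatorial map on vertices. Labelling the vertices of $\mathcal{P}$ counterclockwise as $v_i = r_i e^{i\psi_i}$ with outward edge normals $\phi_i$ (so that $v_i$ is the extreme vertex in the angular range $(\phi_{i-1}, \phi_i)$), for each $i$ let $\sigma(i)$ be the index of the vertex of $\mathcal{P}$ that is extreme in the direction $\psi_i + \theta$. The condition $tv_i \in \mathcal{P}$, evaluated via the support function at $\arg(tv_i) = \psi_i + \theta$, yields
\[ r_{\sigma(i)} \cos(\psi_{\sigma(i)} - \psi_i - \theta) \geq r \cdot r_i. \]
Iterating $\sigma$ produces an eventually periodic sequence with cycle of some length $p \leq M$. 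Around the cycle, setting $\epsilon_j = \psi_{i_{j+1}} - \psi_{i_j} - \theta$, the product of the $p$ inequalities telescopes (since $\prod_j r_{i_{j+1}}/r_{i_j} = 1$) to $\prod_j \cos\epsilon_j \geq r^p$. AM-GM then gives $\frac{1}{p}\sum_j \cos\epsilon_j \geq r$, and Jensen's inequality applied to the concave function $\cos$ on $[-\pi/2, \pi/2]$ upgrades this to $\cos\bar\epsilon \geq r$, hence $|\bar\epsilon| \leq \mu$, where $\bar\epsilon := \frac{1}{p}\sum_j \epsilon_j$.

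Finally, the cycle closes with $p(\theta + \bar\epsilon) = 2\pi n$ for some integer $n$. When $n \neq 0$, this gives $p \geq 2\pi/|\theta + \bar\epsilon| \geq 2\pi/(\theta + \mu) \geq 2\pi/(3\eta)$, and since $p \leq M$ this completes the argument. The main obstacle I foresee is ruling out the case $n = 0$, where the cycle has zero net rotation; this requires $\sigma$ to have a fixed point, which is only possible when $\theta < \mu$. To handle this case I would use the complementary constraint coming from the chord $[v_i, tv_i] \subset \mathcal{P}$: since $\arg(t-1) = \pi/2 + \eta$, this chord leaves $v_i$ in direction $\psi_i + \pi/2 + \eta$, which must lie in the interior cone at $v_i$; this forces $\phi_i - \psi_i \leq \eta$ at every vertex. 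Starting the iteration at a vertex of maximum modulus and using this uniform constraint should preclude the zero-rotation possibility and close the proof.
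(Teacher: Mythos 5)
Your argument is sound up to the winding-number dichotomy: the relation $r=\cos\eta/\cos(\theta-\eta)$, the bound $\theta+\mu\leq 3\eta$, the support-function inequality for $\sigma$, the telescoping product around a cycle, and the AM--GM/Jensen step are all correct, and when the cycle of $\sigma$ has nonzero winding number $n$ you do get $M\geq p\geq 2\pi/(\theta+\mu)\geq 2\pi/(3\eta)$. The gap is the case $n=0$, and your proposed fix does not close it. Take $\eta=1/2$ and $\theta=1/20$, so $r=\cos(0.5)/\cos(0.45)\approx 0.975$, and let $\mathcal{P}=\mathrm{conv}\{t^k:k\geq 0\}$ (a convex polygon with on the order of $2\pi/\theta$ vertices, invariant under $t$ and containing $0$ in its interior). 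Here the vertex $v_0=1$ is simultaneously the unique vertex of maximum modulus and a fixed point of $\sigma$: the maximizer of $x\mapsto \mathrm{Re}(x e^{-i\theta})$ over $\{t^k\}$ is $k=0$, since $\cos\theta\approx 0.99875>r$. So starting at the maximal-modulus vertex gives $p=1$, $n=0$, and no bound. Worse, the chord constraint $\phi_i-\psi_i\leq\eta$ holds at $v_0$ with equality, so it excludes nothing; and since for this polygon $\sigma$ moves every other vertex \emph{backwards} (clockwise) by roughly $\lfloor\eta/\theta\rfloor$ indices, every $\sigma$-orbit is absorbed into this fixed point, so no choice of starting vertex rescues the argument. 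The zero-rotation case is therefore not a technicality: it occurs precisely in the regime $\theta\ll\eta$, where a slowly rotating, slowly contracting $t$ admits spiral-hull invariant polygons on which your dominant-vertex dynamics is trivial.

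The paper's proof sidesteps this by never introducing a dynamically defined cycle: it runs the product $\prod_j l_{j+1}/l_j=1$ over \emph{all} $M$ consecutive vertices of the boundary, for which the central angles $\phi_j=\angle(P_jOP_{j+1})$ sum to exactly $2\pi$ unconditionally (winding number $1$ is built in), and derives the per-step lower bounds $l_{j+1}/l_j\geq\cos(\eta)/\cos(\phi_j-\eta)$ or $\geq\cos(\eta)$ from the single geometric consequence of invariance $\beta_j=\angle(OP_jP_{j+1})\geq\pi/2-\eta$ (your chord constraint in different clothing). If you want to salvage your approach, you would need to replace the orbit of $\sigma$ by a chain of vertices whose total angular advance is forced to be $2\pi$ --- at which point you have essentially reconstructed the paper's argument with consecutive vertices.
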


\begin{proof}
Without loss of generality assume that $|t| \leq 1$ and $\textup{Im}(t)>0$. See Figure \ref{Step1} to visualize the angle $\eta$ geometrically. Let $P_1 , \dots , P_M$ be the vertices of $\mathcal{P}$ in counter clockwise order and let $O$ denote the origin. Define $\beta_j := \angle(OP_jP_{j+1})$ and $\phi_j := \angle(P_jOP_{j+1})$ (see Figure \ref{triangle}). The proof is divided into a few Steps.\\

\begin{figure}
\labellist
\pinlabel $P_j$ at 95 187
\pinlabel $P_{j+1}$ at -5 93
\pinlabel $O$ at 150 20
\pinlabel $\beta_j$ at 100 150
\pinlabel $\phi_j$ at 125 47
\endlabellist
\centering 
\includegraphics[width= 1.3 in]{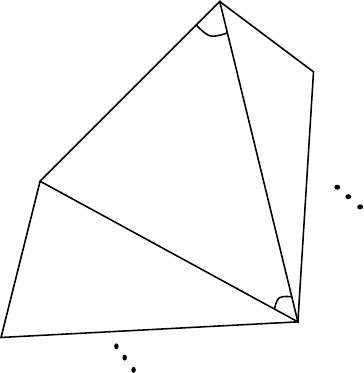}
\caption{The triangle $OP_jP_{j+1}$.}
\label{triangle}
\end{figure}

\textbf{Step 1}: \vspace{-5mm} \[ \beta_j \geq \frac{\pi}{2}-\eta. \]
This is because if the above condition is not satisfied, then $t P_j$ lies outside of the polygon $\mathcal{P}$ (see Figure \ref{Step1}, right hand side). Contradicting the assumption that the polygon $\mathcal{P}$ is invariant under multiplication by $t$.\\ 

\begin{figure}
\labellist
\pinlabel $t$ at 115 65
\pinlabel $\eta$ at 140 35
\pinlabel $1$ at 140 0
\pinlabel $0$ at -5 0
\pinlabel $O$ at 210 0
\pinlabel $P_j$ at 358 -5 
\pinlabel $tP_j$ at 345 65
\pinlabel $P_{j+1}$ at 285 80
\pinlabel {$\frac{\pi}{2}- \eta$} at 375 30
\pinlabel $\beta_j$ at 320 18

\endlabellist
\centering 
\includegraphics[width= 3 in]{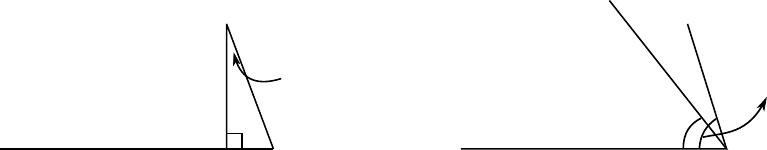}
\caption{Left: the angle $\eta$, Right: Step 1}
\label{Step1}
\end{figure}
Define $l_j = |OP_j|$. We will work with the values $\frac{l_{j+1}}{l_j}$. Note $P := \prod_{j=1}^M \frac{l_{j+1}}{l_j} =1$.  
The index set $\mathcal{M}: = \{ 1 , \dots , M \}$ can be partitioned into two sets according to whether $\phi_j < \eta$ or not. 
\[ \mathcal{A} = \{ 1 \leq j \leq M \hspace{2mm}| \hspace{2mm} \phi_j \geq \eta \}  \text{   and} \hspace{3mm} \mathcal{B}= \{ 1 \leq j \leq M \hspace{2mm}| \hspace{2mm} \phi_j < \eta \}.   \]
Define $ P_{\mathcal{A}}$ and $P_{\mathcal{B}}$ as follows
\[ P_\mathcal{A} := \prod_{j \in \mathcal{A}} \hspace{1mm} \frac{l_{j+1}}{l_j} \hspace{3mm}, \hspace{3mm}P_\mathcal{B} := \prod_{j \in \mathcal{B}} \hspace{1mm} \frac{l_{j+1}}{l_j}.\]
We clearly have $P = P_{\mathcal{A}} \hspace{1mm} \cdot \hspace{1mm} P_{\mathcal{B}}=1$. We give lower bounds for the values of $ P_{\mathcal{A}}$ and $P_{\mathcal{B}}$.\\
\textbf{Step 2}: \vspace{-5mm }\[ \phi_j -\eta < \frac{\pi}{2}. \]
To see this, consider the triangle $OP_jP_{j+1}$ and note that sum of any two angles has to be less than $\pi$. 
\[ \beta_j + \phi_j < \pi \implies \frac{\pi}{2}-\eta + \phi_j < \pi \implies \phi_j -\eta < \frac{\pi}{2}.\]
Here we used Step 1 for the first implication.\\
\textbf{Step 3}: For any $j \in \mathcal{A}$
\[ \frac{l_{j+1}}{l_j} \geq \frac{\cos(\eta)}{\cos(\phi_j -\eta)}.  \]
Consider the triangle $OP_jP_{j+1}$. Let $A$ be the point on the segment $OP_{j+1}$ such that $\angle(OP_jA) = \frac{\pi}{2}-\eta$. Such a point exists by Step 1, since 
\[ \angle{OP_jA = \frac{\pi}{2} - \eta \leq \beta_j = \angle{OP_j P_{j+1}}}.  \]
Let $H$ be the projection of $O$ onto $P_jA$ (see Figure \ref{convexlemma}). It follows from the assumption $j \in \mathcal{A}$ that the point $H$ lies inside the triangle $OP_jP_{j+1}$. This is because
\[ \angle{P_j OH} = \eta \leq \phi_j = \angle{P_j OA}. \]
Then 
\[ \frac{l_{j+1}}{l_j} = \frac{OP_{j+1}}{OP_j} \geq \frac{OA}{OP_j} = \frac{OA}{OH} \cdot \frac{OH}{OP_j} = \frac{1}{\cos(\phi_j -\eta)} \cdot \cos(\eta). \] 

\begin{figure}
\labellist
\pinlabel $H$ at 73 47
\pinlabel $P_j$ at 100 10
\pinlabel $P_{j+1}$ at 120 123
\pinlabel $A$ at 70 83
\pinlabel $O$ at -5 10
\pinlabel $\phi_j$ at 17 57

\pinlabel $O$ at 175 12
\pinlabel $P_j$ at 275 12
\pinlabel $P_{j+1}$ at 282 40
\pinlabel $H$ at 252 55
\pinlabel $D$ at 252 28
\pinlabel $\eta$ at 200 45
\pinlabel {$\frac{\pi}{2}-\eta$} at 250 -10
\pinlabel $\eta$ at 35 20
\endlabellist
\centering 
\includegraphics[width= 3.5 in]{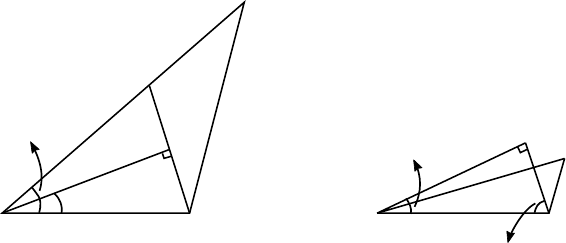}
\caption{Left: Step 3, Right: Step 4}
\label{convexlemma}
\end{figure}
\noindent \textbf{Step 4}: For any $j \in \mathcal{B}$, we have:
\[ \frac{l_{j+1}}{l_j} \geq \cos(\eta). \]
Choose the point $H$ such that $\angle(P_jOH) =\eta $ and $\angle(OP_jH) = \frac{\pi}{2}-\eta$. Therefore $\angle(OHP_j) = \frac{\pi}{2}$. Let $D$ be the intersection of the lines $OP_{j+1}$ with $P_jH$. Then $D$ lies on the segments $OP_{j+1}$ and $P_j H$ (see Figure \ref{convexlemma}). To see this note that 
\[ \angle{OP_jP_{j+1} }=\beta_j \geq \frac{\pi}{2} -\eta = \angle{OP_jH},  \]
\[ \angle{P_jOH} = \eta \geq \phi_j = \angle{ P_jOP_{j+1}} . \]
Here the first inequality is by Step 1, and the second inequality follows from the assumption $j \in \mathcal{\beta}$. Now 
\[ \frac{l_{j+1}}{l_j}= \frac{OP_{j+1}}{OP_j} \geq \frac{OD}{OP_j} \geq \frac{OH}{OP_j} = \cos(\eta).  \]
Now we are ready to give a lower bound for $M$. Note that Steps 3 and 4 imply that: 
\[ 1 = P = P_{\mathcal{A}} \hspace{1mm} \cdot \hspace{1mm} P_{\mathcal{B}} \geq \bigg( \prod_{j \in \mathcal{A}} \frac{\cos(\eta)}{\cos(\phi_j-\eta)}\bigg) \cdot \bigg( \prod_{j \in \mathcal{B} }\cos(\eta) \bigg) \]
\[ \implies \cos(\eta)^{\frac{M}{|\mathcal{A}|}} \leq \bigg( \prod_{j \in \mathcal{A}} \cos(\phi_j-\eta) \bigg) ^{\frac{1}{|\mathcal{A}|}}, \]
where $|\mathcal{A}|$ is the cardinality of $\mathcal{A}$. Now we observe that, keeping the sum of $\phi_j$ for $j \in \mathcal{A}$ fixed, the product of $\cos(\phi_j-\eta)$ is maximized when all $\phi_j$ are equal. This is simply a consequence of the following inequality, where we take $a$ and $b$ to be the quantities $\phi_j - \eta$: 
\[ \cos(a) \cdot \cos(b) \leq \bigg( \cos(\frac{a+b}{2}) \bigg)^2  \hspace{2mm}.   \]
Crucially $0 \leq \phi_j - \eta < \frac{\pi}{2}$, for each $j \in \mathcal{A}$ (by Step 2, and the definition of the set $\mathcal{A}$), which implies that all $\cos(\cdot)$ involved are non-negative. To see the inequality holds, note that:
\[ 2\cos(a) \cdot \cos(b) = \cos(a+b) + \cos(a-b) =   \]
\[=2 \big(\cos(\frac{a+b}{2})\big)^2 -1 + \cos(a-b) \leq  2 \big(\cos(\frac{a+b}{2})\big)^2. \]
Let $\overline{\phi}$ be the average of the angles $\phi_j -\eta $ for $j \in \mathcal{A}$; then we have $0 \leq \overline{\phi} \leq \frac{\pi}{2}$, since each of the angles $\phi_j - \eta$ satisfied the same bounds.  By definition of the set $\mathcal{B}$ 
\[ \forall j \in \mathcal{B} \hspace{3mm} \phi_j < \eta \implies   \sum_{j \in \mathcal{B}}\phi_j < \eta \cdot |\mathcal{B}| = \eta (M - |\mathcal{A}|). \]
Hence
\[ \bar{\phi} = \frac{\sum_{j \in \mathcal{A}} (\phi_j-\eta)}{|\mathcal{A}|}= \frac{\sum_{j \in \mathcal{M}}\phi_j-\sum_{j \in \mathcal{B}}\phi_j -|\mathcal{A}|\eta}{|\mathcal{A}|}\geq \]
\[ \geq \frac{2\pi-\eta(M-|\mathcal{A}|)-|\mathcal{A}|\eta}{|\mathcal{A}|}= \frac{2 \pi -M\eta}{|\mathcal{A}|}. \]
Now if $(2 \pi -M\eta)$ is negative, then there is nothing to prove. Otherwise $0 \leq \frac{2 \pi -M\eta}{|\mathcal{A}|} \leq \overline{\phi} \leq \frac{\pi}{2}$ and therefore: 
\[ \cos(\eta)^{\frac{M}{|\mathcal{A}|}} \leq \bigg( \prod_{j \in \mathcal{A}} \cos(\phi_j-\eta) \bigg) ^{\frac{1}{|\mathcal{A}|}} \leq \cos(\overline{\phi})  \leq \cos(\frac{2\pi-M\eta}{|\mathcal{A}|}). \]
The next step is to give a lower bound for $\cos(\eta)^{\frac{M}{|\mathcal{A}|}}$.\\
\textbf{Step 5}: For $\alpha \geq 1$ and $0 \leq x \leq \frac{1}{2}$ the following inequality holds: 
\[ (1-x)^\alpha \geq 1 -2 \alpha x. \]
This inequality can be proved by noting that the values of both sides agree at $x = 0$ and then checking the signs of derivatives for $0 \leq x \leq \frac{1}{2}$ and $1 \leq \alpha$ (for the variable $x$).\\ The assumption $\eta \leq 1$  implies that $0 \leq \frac{\eta ^2}{2} \leq \frac{1}{2}$ and hence $0 \leq 1 - \frac{\eta^2}{2}$.  The inequality $\cos(y) \geq 1-\frac{y^2}{2}$ holds for every real number $0 \leq y \leq 1$, and clearly $\frac{M}{|\mathcal{A}|} \geq 1$. Hence we have the following lower bound: 
\[ \cos(\eta)^{\frac{M}{|\mathcal{A}|}} \geq (1-\frac{\eta^2}{2})^\frac{M}{|\mathcal{A}|} \geq 1 - 2(\frac{M}{|\mathcal{A}|})\frac{\eta^2}{2} = 1 - \frac{M\eta^2}{|\mathcal{A}|}, \]
where in the last inequality we have used Step 5.
Combining with the previous bound, we obtain that: 
\[ 1 - \frac{M\eta^2}{|\mathcal{A}|} \leq \cos(\frac{2 \pi-M\eta}{|\mathcal{A}|}) \implies  1 - \cos(\frac{2\pi -M\eta}{|\mathcal{A}|}) \leq \frac{M\eta^2}{|\mathcal{A}|} \implies \]
\[ \implies 2\sin(\frac{2\pi-M\eta}{2|\mathcal{A}|})^2 \leq \frac{M\eta^2}{|\mathcal{A}|}  \implies \sin(\frac{2\pi-M\eta}{2|\mathcal{A}|}) \leq \sqrt{\frac{M}{2|\mathcal{A}|}} \hspace{1mm}\eta.  \]
Recall the assumption $0 \leq \frac{2\pi - M \eta}{|\mathcal{A}|} \leq \frac{\pi}{2}$. Therefore the quantity $\frac{2\pi - M \eta}{2|\mathcal{A}|}$ lies in the interval $[0, \frac{\pi}{4}]$. Now in the interval $[0,\frac{\pi}{4}]$, the inequality $\sin(x) \geq \frac{x}{\sqrt{2}}$ holds. Hence, 
\[ \frac{1}{\sqrt{2}} \frac{2\pi -M\eta}{2|\mathcal{A}|} \leq \sin(\frac{2\pi -M\eta}{2|\mathcal{A}|}) \leq \sqrt{\frac{M}{2|\mathcal{A}|}}\hspace{1mm} \eta. \]
\[ \implies 2\pi -M\eta \leq 2 \sqrt{M|\mathcal{A}|}\hspace{1mm} \eta  \implies 2\pi -M\eta \leq 2 M \eta \implies  \]
\[ \implies M\eta \geq \frac{2 \pi}{3}.  \]
\end{proof}

\begin{proof}[Proof of Theorem \ref{lower-perron}:]\ \\

Assume $d_{PF}(p) =n$. Therefore, there is an $n \times n$ non-negative, integral, aperiodic matrix $A=[a_{ij}]$ with spectral radius equal to $p$. Let $B \colon \mathbb{R}^d \rightarrow \mathbb{R}^d$ be the companion matrix corresponding to the minimal polynomial of $p$. Let $w$ be an eigenvector for the map $B$ corresponding to the eigenvalue $p$, and denote by $E \subset \mathbb{R}^d$ the positive half-space containing $w$.
By Theorem \ref{Lindconverse}, there are integral points $z_1, \dots, z_n \in E$ such that for each $1 \leq i \leq n$ 
\[ Bz_i = \sum a_{ij}z_j. \]
Let $\mathcal{C}$ be the cone over the points $z_1, \dots, z_n$, that is
\[ \mathcal{C} = \{ \epsilon_1 z_1 + \dots+\epsilon_n z_n \hspace{2mm}| \hspace{2mm} \forall i \hspace{2mm} \epsilon_i \geq 0  \} \subset E. \]
The cone $\mathcal{C}$ is invariant under the action of $B$, that is $ B(\mathcal{C}) \subset \mathcal{C}$.
Let $E_p$ and $E_{p'}$ be the 1-dimensional and 2-dimensional invariant subspaces of $\mathbb{R}^d$ corresponding to $p$ and $p'$ respectively. Set $W := E_p \oplus E_{p'}$, and let $\pi : \mathbb{R}^d \longrightarrow W $ be the projection onto the invariant subspace $W$ along the complementary direct sum. Since the maps $B$ and $\pi$ commute, we have:
\[ Bz_i = \sum_j a_{ij}z_j \implies B(\pi(z_i)) = \pi(B(z_i))=  \sum_j a_{ij}P(z_j).  \]
Therefore, if we set $\hat{z_i}:=\pi(z_i)$, then $\hat{z_i} \in W \cap E$ and they satisfy the same linear equations as $z_i$ did. Hence the cone $\hat{\mathcal{C}} \subset W \cap E$ over the points $\hat{z_i}$ is invariant under the linear action of $\hat{B}:=B_{|W}$.

By Lemma \ref{projection}, since $z_i$ are integral points, none of the points $\pi(z_i)$ can lie entirely inside the 1-dimensional subspace $E_p \subset W$; otherwise $\pi_{p'}(z_i)=0$. Therefore, the cone $\hat{\mathcal{C}}$ is non-degenerate. In summary, the cone over the points $\pi(z_i)$ is a non-degenerate polygonal cone $\hat{\mathcal{C}}$ in $W$, which is invariant under the action of $\hat{B}$. By assumption the map $\hat{B}$ satisfies the conditions of Proposition \ref{invariantcone}. Therefore, the desired bound holds. 

\end{proof}

\section{applications}

\newtheorem*{cubic}{Corollary \ref{cubic}}
\begin{cubic}
[Lind, McMullen, Thurston]
For any $N>0$, there are cubic Perron numbers whose Perron-Frobenius degrees are larger than $N$. 
\end{cubic}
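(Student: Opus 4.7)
The plan is to exhibit a family of cubic Perron numbers whose quantity $\eta$ from Theorem \ref{lower-perron} tends to zero; the lower bound $2\pi/(3\eta)$ will then force $d_{PF} \to \infty$. A first observation: for a cubic Perron number $p$ with complex-conjugate Galois pair $p',\overline{p'}$, Vieta's formulas give $\textup{Re}(p') = (\textup{tr}(p) - p)/2$, where $\textup{tr}(p)$ is the sum of the three Galois conjugates, so $p - \textup{Re}(p') = (3p - \textup{tr}(p))/2$ is small exactly when $\textup{tr}(p)$ is close to $3p$. This pushes one toward minimal polynomials of the form $(x-n)^3 + (\text{lower order in } x-n)$ for large integers $n$.

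Concretely, for positive integers $a$ and $n$ I would consider the family
$$f_{n,a}(x) \;:=\; (x-n)^3 + a(x-n) - 1 \;=\; x^3 - 3nx^2 + (3n^2+a)x - (n^3 + an + 1).$$
The substitution $y = x-n$ reduces the analysis to the depressed cubic $g_a(y) = y^3 + ay - 1$. For every integer $a \geq 1$, $g_a$ is irreducible over $\mathbb{Q}$ by the rational-root theorem (one checks $g_a(\pm 1) \neq 0$), and its discriminant $-4a^3 - 27$ is negative, so $g_a$ has one real root $y_1 > 0$ and a complex-conjugate pair $y_2 = -y_1/2 + i\gamma$, $\overline{y_2}$ with $\gamma > 0$. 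Sign analysis of $g_a$ gives $1/(a+1) < y_1 < 1/a$, and Vieta's formulas give $\gamma^2 = a + 3y_1^2/4 > a$.

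Setting $p := n + y_1$ and $p' := n + y_2$, expansion yields $p^2 - |p'|^2 = 3ny_1 - a$, so $p$ is a cubic Perron number (with non-real Galois conjugate $p'$) as soon as $3ny_1 > a$; using $y_1 > 1/(a+1)$ this holds whenever $n \geq a^2$. The quantity controlling the lower bound is
$$\eta \;=\; \tan^{-1}\!\left(\frac{p - \textup{Re}(p')}{|\textup{Im}(p')|}\right) \;=\; \tan^{-1}\!\left(\frac{3y_1}{2\gamma}\right).$$
The bounds $y_1 < 1/a$ and $\gamma > \sqrt{a}$ give $3y_1/(2\gamma) < 3/(2a^{3/2}) \leq 3/2 < \tan(1)$, so $\eta < 1$ for every $a \geq 1$ and Theorem \ref{lower-perron} applies. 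Since $\tan^{-1}(x) \leq x$ for $x \geq 0$, the same bounds yield $\eta \leq 3/(2a^{3/2})$, hence
$$d_{PF}(p) \;\geq\; \frac{2\pi}{3\eta} \;\geq\; \frac{4\pi a^{3/2}}{9}.$$
Given any $N > 0$, choosing $a$ large enough so that $4\pi a^{3/2}/9 > N$, and then any integer $n \geq a^2$, produces the desired cubic Perron number.

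The main obstacle is meeting four constraints simultaneously: (i) integer coefficients, (ii) irreducibility of the minimal polynomial (so $p$ has algebraic degree exactly $3$), (iii) the root geometry (one real dominant root with a nearby complex-conjugate pair), and (iv) $\eta \to 0$. The packaging $f_{n,a}(x) = (x-n)^3 + a(x-n) - 1$ is what decouples them: (i)-(iii) become $n$-independent properties of $g_a$, property (iv) is engineered by letting $a \to \infty$, and the Perron condition $3ny_1 > a$, which genuinely couples $a$ and $n$, is then absorbed by taking $n$ large in terms of $a$.
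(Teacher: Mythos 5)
Your proposal is correct: I checked the Vieta computations for $g_a(y)=y^3+ay-1$ (namely $\textup{Re}(y_2)=-y_1/2$, $|y_2|^2=a+y_1^2$, hence $\gamma^2=a+3y_1^2/4$), the identity $p^2-|p'|^2=3ny_1-a$, the bounds $1/(a+1)<y_1<1/a$, the irreducibility and discriminant arguments, and the final estimate $\eta\leq 3/(2a^{3/2})<1$, and all of them hold; so Theorem \ref{lower-perron} applies and gives $d_{PF}(p)\geq 4\pi a^{3/2}/9\to\infty$. The overall strategy is necessarily the same as the paper's (produce cubic Perron numbers with $\eta\to 0$ and invoke Theorem \ref{lower-perron}), but your realization of it is genuinely different and noticeably cleaner. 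The paper perturbs $(c-x)[(a-x)^2+b^2]$ by $+1$, which spreads the three roots apart and forces a cascade of coupled inequalities (its Claims 1--7) to control irreducibility, the Perron property, and the size of $\eta$; your choice of the translated depressed cubic $(x-n)^3+a(x-n)-1$ clusters all three roots near $n$, so that $p-\textup{Re}(p')=3y_1/2$ and $|\textup{Im}(p')|=\gamma$ come out as exact closed-form quantities, irreducibility and the root geometry become $n$-independent facts about $y^3+ay-1$, and the only genuine coupling between parameters is the single inequality $3ny_1>a$ ensuring the Perron property, absorbed by $n\geq a^2$. What the paper's construction buys instead is explicit control of $|p'|$ from below (its Claim 5), which it reuses later to manufacture the biPerron examples of Corollary \ref{biperron}; your family would need an analogous lower bound on $|\omega_2|$ before it could be fed into Observation \ref{perron-to-biperron}, and in fact your roots satisfy $|p'|\approx p$, so the hypothesis $|\gamma'|\leq\gamma-2$ of that observation would fail for your family. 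As a proof of Corollary \ref{cubic} alone, however, your argument is complete and shorter.
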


\begin{proof}
The idea is to construct a cubic polynomial $f(x)$ with exactly one real root $w_1>0$, and such that for one of the other roots say $w_2$: 

\begin{enumerate}
\item The absolute value of $w_2$ is smaller but very close to $w_1$ and the argument of $w_2$ is very small. 
\item $f(x)$ is irreducible. 
\end{enumerate}
It is easy to construct a reducible polynomial of the form $g(x)=(c-x)[(a-x)^2+b^2]$ that satisfies $(1)$. Moreover by perturbing $g(x)$, one expects that $g(x)$ becomes irreducible and still satisfies (1).  The details are as follows. Let $\epsilon >0$. The proof breaks into a few parts. \\

\textbf{Step 1}: There are natural numbers $a,b,c \gg 0$ satisfying the inequalities
\begin{equation} 
\sqrt{a^2+b^2} < c \leq a+\epsilon \hspace{1mm} b, \hspace{6mm} \bigg(\frac{a}{b}\bigg)^2 \leq c. 
\label{step1}
\end{equation}

First by choosing $a_0$ much larger than $b_0$, we may arrange that $\sqrt{a_0^2+b_0^2}< a_0 + \epsilon \hspace{1mm}b_0$. 
Let $c_0$ be a positive integer satisfying $ \bigg( \frac{ a_0}{b_0} \bigg)^2 \leq c_0$. 
Pick $k \gg 0$ such that
\[ k(a_0+ \epsilon \hspace{1mm} b_0) - k \sqrt{a_0^2+b_0^2} \geq c_0+4, \]
and denote by $c$ the largest integer between $k \sqrt{a_0^2+b_0^2}$ and $k(a_0+ \epsilon \hspace{1mm} b_0)$. Therefore
\[ c \geq k (a_0 + \epsilon \hspace{1mm} b_0) -1 \geq (c_0+4) -1 = c_0 +3 > c_0.  \]
Set $a = k \hspace{1mm} a_0$ and $b= k \hspace{1mm} b_0$. Now we check that the desired inequalities hold for $a,b,c$. The first inequality is satisfied by the definition of $c$. Moreover
\[ c \geq c_0 \geq \bigg( \frac{a_0}{b_0} \bigg)^2 = \bigg( \frac{a}{b} \bigg)^2. \]
 
 Note that by choosing $k \gg 0$, we can assume that all of the numbers $a,b$ and $c$ are large. This proves Step 1. Define the cubic polynomial $f(x)$ as $ f(x) = (c-x)[(a-x)^2+b^2]+1$.\\

\textbf{Step 2}: The polynomial $f(x)$ has a real root $\omega_1$ satisfying
\[  c< \omega_1< \min \{ c+1, c+ \frac{c+1}{a^2+b^2-1} \}.  \]

By the Intermediate Value Theorem, it is enough to show that
\[ f(c)>0, \hspace{6mm} f(c+1)<0, \hspace{6mm} f(c+ \frac{c+1}{a^2+b^2-1} )<0. \]
We have $f(c)=1>0$, and 
\[ f(c+1) = -[(a-c-1)^2+b^2]+1 \leq -b^2+1 <0. \]
Here we have used the assumption $b>1$.

For the last part, set $p = c+ \frac{c+1}{a^2+b^2-1}$. Hence
\[ f(p)= - \frac{c+1}{a^2+b^2-1} [ (a-p)^2+b^2]+1 \leq - \bigg(\frac{c+1}{a^2+b^2-1}\bigg) \cdot b^2 +1 < 0 \iff \]
\[ \iff 1 < \bigg(\frac{c+1}{a^2+b^2-1}\bigg) \cdot b^2 \iff a^2+b^2-1 < c \hspace{1mm}b^2+b^2 \iff a^2-1 < c \hspace{1mm b^2}. \]
But the last inequality holds by the assumption $\big(\frac{a}{b}\big)^2 \leq c$. Therefore the Step follows.\\

\textbf{Step 3}: $f(x)$ has exactly one real root. \\

To see this, assume the contrary that all roots of $f(x)$ are real. Denote the other two roots by $\omega_2$ and $\omega_3$. We will prove that 
$ \bigg( \frac{\omega_2 + \omega_3}{2} \bigg)^2 < \omega_2 \omega_3$, 
which gives a contradiction. After expanding we deduce that 
\[ f(x) = -x^3 + (c+2a)x^2-(2ac+a^2+ b^2)x +c(a^2+b^2)+1 .\]
By the Vieta's formula
\[ \omega_1 + \omega_2 + \omega_3 = c+2a, \]
\[ \omega_1 \omega_2 \omega_3 = c(a^2+b^2)+1. \]
Therefore
\[ 0< 2a-1< \omega_2 + \omega_3 = c+2a- \omega_1 < 2a, \]
where we have used the inequality $c < \omega_1 < c+1$ from Step 2. As a result
\[ \omega_2 \omega_3 = \frac{c(a^2+b^2)+1}{\omega_1} > a^2 \iff \omega_1 < \frac{c(a^2+b^2)+1}{a^2}. \]
Here the first equality is the application of the Vieta's formula. Using Step 2, In order to verify the last inequality, it is enough to show that 
\[ c+ \frac{c+1}{a^2+b^2-1} < \frac{c(a^2+b^2)+1}{a^2}= c+ \frac{c \hspace{1mm } b^2+1}{a^2} \iff \]
\[ \iff \frac{c+1}{a^2+b^2-1} <  \frac{c \hspace{1mm } b^2+1}{a^2} \iff (c+1)a^2 < (c \hspace{1mm}b^2+1)(a^2+b^2-1). \]
But we have
\[ (c+1) < c\hspace{1mm} b^2+1, \hspace{6mm} a^2 < (a^2+b^2-1), \]
which imply the last part. Therefore, we established that $\omega_2 \omega_3 \geq a^2$. Putting them together, we obtain
\[ \bigg( \frac{\omega_2 + \omega_3}{2} \bigg)^2 < \bigg(\frac{2a}{2} \bigg)^2 = a^2 < \omega_2 \omega_3. \]
This completes the proof of the Step. Therefore, $\omega_2$ and $\omega_3$ are both non-real and $\omega_3 = \overline{\omega_2}$.\\

\textbf{Step 4}: $\omega_1$ is a Perron number.\\

Since $\omega_3 = \overline{\omega_2}$, we have $ |\omega_2|^2 = \omega_2 \omega_3$.
Therefore, by the Vieta's formula
\[ |\omega_2|^2 = \omega_2 \omega_3 = \frac{c(a^2+b^2)+1}{\omega_1} \leq \frac{c(a^2+b^2)+1}{c}= a^2+b^2 + \frac{1}{c} \leq a^2 +b^2+1 \leq c^2 < \omega_1^2. \]
Here we used $\omega_1 >c$ for the first and the last inequality. The relation $a^2+b^2+1 \leq c^2$ follows from $ a^2+b^2 < c^2$ (Step 1)
and the fact that both $a^2+b^2$ and $c^2$ are integers.\\

\textbf{Step 5}: 
\vspace{-4mm}
\[ |\omega_2|^2  \geq a^2+b^2-1.\]

Again using the Vieta's formula
\[ |\omega_2|^2 = \omega_2 \omega_3 = \frac{c(a^2+b^2)+1}{\omega_1} \geq a^2+b^2-1 \iff  \omega_1 \leq \frac{c(a^2+b^2)+1}{a^2+b^2-1} = c+ \frac{c+1}{a^2+b^2-1}. \]
But the last inequality holds by Step 2.\\

\textbf{Step 6}: Denote the real and imaginary part of $\omega_2$ by $\textup{Re}(\omega_2)$ and $\textup{Im}(\omega_2)$. Then
\[  |\textup{Re}(\omega_2) | \leq a, \hspace{4mm} 0 < \omega_1 - \textup{Re}(\omega_2) < c-a + 2, \hspace{4mm}  |\textup{Im}(\omega_2)|^2 \geq b^2-1. \]

Since $\omega_2$ and $\omega_3$ are complex conjugates, we have $\omega_2 + \omega_3 = 2 \textup{Re}(\omega_2)$.  
By the Vieta's formula
\[ \textup{Re}(\omega_2) = \frac{\omega_2+ \omega_3}{2} = \frac{\omega_1 + \omega_2+ \omega_3 - \omega_1}{2} = \frac{c+2a-\omega_1}{2}. \]
Therefore 
\[ |\textup{Re}(\omega_2)| \leq \frac{c+2a -c}{2} = a.   \]
Here for the last inequality, we have used $c< \omega_1 <c+1$ from Step 2. This verifies the part one of the Step. For the second part, by Step 4 $|\omega_2| < \omega_1$, which implies that $0< \omega_1 - \textup{Re}(\omega_2)$. Moreover 
\[ \omega_1 - \textup{Re}(\omega_2) = \omega_1 - \bigg(\frac{c+2a-\omega_1}{2} \bigg) = \frac{3 \omega_1 -(c+2a)}{2} \leq \frac{3(c+1)-(c+2a)}{2} < c-a+2.   \]
Here we used $\omega_1 < c+1$ from Step 2. This completes the second part. For the third part 
\[ |\textup{Im}(\omega_2)^2| = |\omega_2|^2 - |\textup{Re}(\omega_2)|^2 \geq  (a^2+b^2-1) - a^2 = b^2-1. \]
Here we have used Step 5, together with the part one of Step 6. \\

\textbf{Step 7}:  \vspace{-5mm}
\[ \bigg(\frac{\omega_1 - \textup{Re}(\omega_2)}{\textup{Im}(\omega_2)}\bigg)^2 \leq \frac{(\epsilon + 2 b^{-1})^2}{1 - b^{-2}}. \]

By parts two and three of Step 6
\[  \bigg(\frac{\omega_1 - \textup{Re}(\omega_2)}{\textup{Im}(\omega_2)}\bigg)^2 \leq \frac{(c-a+2)^2}{b^2-1} \leq \frac{(\epsilon \hspace{1mm}b +2)^2}{b^2-1}= \frac{(\epsilon + 2 b^{-1})^2}{1 - b^{-2}}.  \]
Here we have used the condition $0 < c-a \leq \epsilon \hspace{1mm}b $ from Step 1.\\

Now we can prove the Corollary. Consider the algebraic integer $\omega_1$ defined as above. Since by Step 2 $c < \omega_1 < c+1$, the number $\omega_1$ is not an integer. The other two roots of $f(x)$ are not real by Step 3. Hence $\omega_1$ is a cubic algebraic integer. By Step 4, $\omega_1$ is Perron. By Theorem \ref{lower-perron}
\[ d_{PF}(\omega_1) \geq \frac{2 \pi}{3 \eta}, \hspace{5mm} \eta:= \tan^{-1} \bigg( \frac{\omega_1 - \textup{Re}(\omega_2)}{|\textup{Im}(\omega_2)|} \bigg), \]
whenever $\eta \leq 1$. By Step 7 we have
\[ \tan(\eta) \leq \frac{\epsilon+ 2 b^{-1}}{\sqrt{1-b^{-2}}}.  \]
As mentioned in Step 1, given any $\epsilon>0$, we may find $a,b,c$ with the given properties such that they are arbitrary large. Therefore we may assume that $b> \epsilon^{-1}$, or equivalently $b^{-1}< \epsilon$. Hence
\[ \tan(\eta) \leq \frac{\epsilon+ 2 b^{-1}}{\sqrt{1-b^{-2}}} \leq \frac{3\epsilon}{\sqrt{1-b^{-2}}} < 6 \epsilon. \] 
Here the last inequality follows from $b \geq 2$.
To sum up we have $\tan(\eta) < 6 \epsilon$, which is equivalent to $\eta< \tan^{-1}(6\epsilon)$ since the tangent function is strictly increasing on the interval $[ 0 , \frac{\pi}{2}]$. As a result 
\[ d_{PF}(\omega_1) \geq \frac{2 \pi}{3 \eta} > \frac{2 \pi}{3 \tan^{-1}(6\epsilon)}. \]
By choosing $\epsilon>0$ arbitrary small, we find arbitrary large lower bounds for $d_{PF}(\omega_1)$. This completes the proof.
\end{proof}

\begin{remark}
If $\lambda$ is a quadratic Perron number, then $d_{PF}(\lambda)=2$. To see this, assume that the minimal polynomial of $\lambda$ is of the form $  f(x) = x^2 -u x+ v$, 
where $u,v \in \mathbb{Z}$ and $\Delta = u^2 - 4v >0$. If we denote the other root by $\lambda'$, then $ u = \lambda + \lambda' >0$, 
since $\lambda$ is Perron. Now if $u$ is even, then $4 | \Delta$ and we may take 
\[A =  \left[ \begin{array}{cc}
\frac{u}{2} & \frac{\Delta }{4}\\
1 & \frac{u}{2} \\
\end{array}
\right] . \]
Then all the entries of $A$ are positive integers, and its characteristic polynomial is equal to $f(x)$. If $u$ is odd, then $\Delta \equiv 1 \hspace{2mm}(\text{mod} \hspace{2mm} 4)$. Moreover $\Delta \neq 1$ since otherwise the polynomial $f(x)$ would not have been irreducible. Therefore, we may take
\[A =  \left[ \begin{array}{cc}
\frac{u+1}{2} & \frac{\Delta -1}{4}\\
1 & \frac{u-1}{2} \\
\end{array}
\right] . \]
The characteristic polynomial of $A$ is equal to $f(x)$. If $u >1$, then $A$ has positive entries. If $u =1$, then we should have $v <0$ since $\Delta>1$. In this case $A^2$ has positive entries.
$\square$
\end{remark}

An algebraic integer is called a \emph{unit}, if the product of all its Galois conjugates is equal to $1$ or $-1$. Equivalently, the constant term of its minimal polynomial should be equal to $1$ or $-1$.  

\begin{definition}
A unit algebraic integer $\alpha>1$ is called \textbf{biPerron}, if all other Galois conjugates of $\alpha$ lie in the annulus $\{ z \in \mathbb{C} \hspace{2mm}| \hspace{2mm} \frac{1}{\alpha}< |z| < \alpha \}$, except possibly for $\alpha^{-1}$.
\end{definition}

\begin{obs}
Let $\gamma>2$ be a Perron number, such that for every other Galois conjugate $\gamma'$ of $\gamma$ we have $ |\gamma'| \leq \gamma -2$.
Then the unique real solution $\alpha>1$ to  $\alpha+\frac{1}{\alpha}=\gamma$ is biPerron.
\label{perron-to-biperron}
\end{obs}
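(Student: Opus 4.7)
The plan is to show that the conjugates of $\alpha$ are controlled by those of $\gamma$ via the substitution $y = x + x^{-1}$, and then to use the hypothesis $|\gamma'| \leq \gamma - 2$ to push every non-trivial conjugate of $\alpha$ into the annulus.

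First I would define the auxiliary polynomial
\[ P(x) := \prod_i (x^2 - \gamma_i x + 1), \]
where $\gamma_1 = \gamma, \gamma_2, \dots, \gamma_d$ are the Galois conjugates of $\gamma$. Since $P(x)$ is a symmetric integer polynomial in the $\gamma_i$, it lies in $\mathbb{Z}[x]$ and is monic, so every root of $P$ is an algebraic integer. The element $\alpha$ is a root of the factor with $i=1$, hence of $P$. Moreover each factor $x^2 - \gamma_i x + 1$ is reciprocal, so $P$ is reciprocal; consequently $1/\alpha$ is also a root of $P$, and hence also an algebraic integer. This shows $\alpha$ is a unit, and the fact that $\alpha > 1$ is immediate from $\gamma > 2$ and the choice in the hypothesis.

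Next I would prove the analytic heart of the statement: if $\beta$ is any root of $x^2 - \gamma_i x + 1 = 0$ with $i \neq 1$, then $1/\alpha < |\beta| < \alpha$. Suppose for contradiction $|\beta| \geq \alpha$. Then using $\beta + \beta^{-1} = \gamma_i$ and the triangle inequality,
\[ |\gamma_i| = |\beta + \beta^{-1}| \geq |\beta| - |\beta|^{-1}. \]
Since the function $t \mapsto t - t^{-1}$ is strictly increasing on $(0,\infty)$, the assumption $|\beta| \geq \alpha$ gives $|\beta| - |\beta|^{-1} \geq \alpha - \alpha^{-1} = \sqrt{\gamma^2 - 4}$. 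But
\[ \sqrt{\gamma^2 - 4} = \sqrt{(\gamma-2)(\gamma+2)} > \sqrt{(\gamma-2)^2} = \gamma - 2 \geq |\gamma_i|, \]
since $\gamma > 2$. This contradicts $|\gamma_i| \geq \sqrt{\gamma^2 - 4}$, so $|\beta| < \alpha$. Applying the same argument to the companion root $1/\beta$ yields $|1/\beta| < \alpha$, i.e.\ $|\beta| > 1/\alpha$, and the two inequalities together place $\beta$ in the required open annulus.

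Finally I would assemble the conclusion: every Galois conjugate of $\alpha$ is a root of $P(x)$; the roots of the $i=1$ factor are exactly $\alpha$ and $\alpha^{-1}$, and for every $i \neq 1$ the above analysis places both roots strictly inside the annulus $\{1/\alpha < |z| < \alpha\}$. Combined with the fact that $\alpha$ is a unit algebraic integer with $\alpha > 1$, this is precisely the definition of a biPerron number. The main (mild) obstacle is the analytic estimate in the middle step, but the strict inequality $\sqrt{\gamma^2 - 4} > \gamma - 2$ for $\gamma > 2$ is exactly what makes the slack ``$-2$'' in the hypothesis do its work.
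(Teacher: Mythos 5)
Your proposal is correct and follows essentially the same route as the paper: both arguments pass conjugates of $\alpha$ to conjugates of $\gamma$ via $x\mapsto x+x^{-1}$, bound $|\alpha'|$ with a triangle inequality, and use the reciprocal pairing of roots to get the lower bound of the annulus. The only differences are cosmetic — you argue by contrapositive using the identity $\alpha-\alpha^{-1}=\sqrt{\gamma^2-4}$ where the paper bounds $|\alpha'|$ directly, and your explicit construction of $P(x)=\prod_i(x^2-\gamma_i x+1)$ makes the unit claim and the conjugate correspondence slightly more rigorous than the paper's brief assertion.
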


\begin{proof}
As the Galois conjugates of $\alpha$ come in reciprocal pairs, the product of all the Galois conjugates is equal to $1$. Therefore, $\alpha$ is a unit algebraic integer. Assume that $\alpha' \notin \{ \alpha , \alpha^{-1} \}$ is a Galois conjugate of $\alpha$. We need to prove that $\frac{1}{\alpha} < |\alpha'| < \alpha$. There is a Galois conjugate $\gamma' \neq \gamma$ of $\gamma$ such that $\alpha' + \frac{1}{\alpha'} = \gamma'$. There are three cases to consider: \begin{enumerate}
\item If $|\alpha'|>1$: 
By the triangle inequality 
\[ |\alpha'| \leq |\alpha'+ \frac{1}{\alpha'}| + |\frac{-1}{\alpha'}| = |\gamma'|+|\frac{1}{\alpha'}| \leq \gamma -2 + |\frac{1}{\alpha'}| = (\alpha + \frac{1}{\alpha})-2+ |\frac{1}{\alpha'}| < \alpha. \]
Here the last inequality follows from $|\alpha'|>1$ and $\alpha>1$. This proves the upper bound for $|\alpha'|$. The lower bound follows from $\frac{1}{\alpha}< 1 < |\alpha'|$.

\item If $|\alpha'|=1$: In this case the inequalities hold trivially as $\frac{1}{\alpha} < 1 = |\alpha'|=1 < \alpha$. 
\item If $|\alpha'|<1$: Then $(\alpha')^{-1}$ is also a Galois conjugate. The result follows from (1), since the inequalities are symmetric.
%
\end{enumerate}
\end{proof}

\begin{remark}
Without the condition on the absolute value of $\gamma'$, the conclusion is not true. As an example one can take $\gamma$ to be the Perron root of the polynomial $ (x-5)[(x-4)^2+3^2]-1$. 
\end{remark}

\newtheorem*{biperron}{Corollary \ref{biperron}}
\begin{biperron}
For any $N>0$, there are biPerron numbers of algebraic degree $\leq 6$, whose Perron-Frobenius degrees are larger than $N$.
\end{biperron}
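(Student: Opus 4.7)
The plan is to apply Observation \ref{perron-to-biperron} to a carefully chosen cubic Perron number, constructed from Corollary \ref{cubic} and then shifted by an integer. The resulting bi-Perron number $\alpha$ will inherit a small $\eta$-value from the cubic, and hence a large Perron-Frobenius degree via Theorem \ref{lower-perron}.

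Fix $\epsilon > 0$ small and construct a cubic Perron number $\omega_1$ with non-real Galois conjugate $\omega_2$, following the recipe of Corollary \ref{cubic} (so the $\eta$-value for $\omega_1$ is at most $\tan^{-1}(6\epsilon)$). In that construction the scaling parameter $k$ can be taken as large as we wish; since the corollary's Claim 6 combined with $\omega_1 > c$ gives $\omega_1 - \textup{Re}(\omega_2) > c - a$, and $c - a$ is of size $k\epsilon b_0$, we can arrange that $\omega_1 - \textup{Re}(\omega_2) > 2$ in addition to the smallness of $\eta$.

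Next, pick a positive integer $K$ (to be chosen large) and set $\gamma := \omega_1 + K$. Then $\gamma$ is a cubic Perron number whose Galois conjugates are $\omega_2 + K$ and $\overline{\omega_2} + K$. The shift preserves both $\omega_1 - \textup{Re}(\omega_2)$ and $|\textup{Im}(\omega_2)|$, so $\eta_\gamma = \eta_{\omega_1}$. From the factorisation
\[ (\gamma - 2)^2 - |\omega_2 + K|^2 = (\omega_1 - \textup{Re}(\omega_2) - 2)(\omega_1 + \textup{Re}(\omega_2) + 2K - 2) - |\textup{Im}(\omega_2)|^2, \]
whose first factor is positive by the previous paragraph, we see that for all $K$ larger than a constant depending only on $\omega_1$ and $\omega_2$, we have $|\gamma'| \leq \gamma - 2$ for every Galois conjugate $\gamma'$ of $\gamma$. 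Observation \ref{perron-to-biperron} then applies, and the unique $\alpha > 1$ with $\alpha + 1/\alpha = \gamma$ is a bi-Perron number. Its minimal polynomial divides the degree-$6$ polynomial $x^3 p(x + 1/x)$, where $p$ is the minimal polynomial of $\gamma$, so $\alpha$ has algebraic degree at most $6$.

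To invoke Theorem \ref{lower-perron} on $\alpha$, I use the non-real Galois conjugate $\alpha' := ((\omega_2 + K) + \sqrt{(\omega_2 + K)^2 - 4})/2$, namely the root of larger modulus of $x^2 - (\omega_2+K)x + 1$. From the expansion $\sqrt{z^2 - 4} = z - 2/z + O(|z|^{-3})$ for $|z|$ large, a direct computation produces
\[ \alpha - \textup{Re}(\alpha') = (\omega_1 - \textup{Re}(\omega_2)) + O(K^{-2}), \qquad |\textup{Im}(\alpha')| = |\textup{Im}(\omega_2)| + O(K^{-2}), \]
so $\eta_\alpha = \eta_{\omega_1} + O(K^{-2})$. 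Given $N > 0$, first choose $\epsilon$ small enough that $2\pi/(3\tan^{-1}(6\epsilon)) > N + 1$, then choose $k$ and $K$ large enough to make all of the above effective; Theorem \ref{lower-perron} then yields $d_{PF}(\alpha) \geq 2\pi/(3\eta_\alpha) > N$, as required.

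The main technical obstacle is converting the asymptotic $O(K^{-2})$ error bounds into explicit inequalities certifying both $\eta_\alpha \leq 1$ and $\eta_\alpha$ close to $\eta_{\omega_1}$, while keeping the parameters $\epsilon, a_0, b_0, k, K$ mutually consistent. This is elementary but requires bookkeeping analogous to Claims 1--7 in the proof of Corollary \ref{cubic}.
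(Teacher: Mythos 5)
Your proposal is correct, and it shares the paper's overall skeleton: pass from a cubic Perron number of Corollary \ref{cubic} to a bi-Perron $\alpha$ via Observation \ref{perron-to-biperron}, then apply Theorem \ref{lower-perron} to $\alpha$ using the conjugate of modulus $>1$ lying over the non-real cubic conjugate. The genuine difference is how you arrange the hypothesis $|\gamma'|\leq \gamma-2$ of the Observation. The paper applies the Observation to $\omega_1$ itself, verifying $|\omega_2|^2\leq a^2+b^2+1\leq(c-2)^2\leq(\omega_1-2)^2$ from the slack $c\geq\sqrt{a^2+b^2}+3$ already built into Claim 1 of Corollary \ref{cubic}; you instead shift to $\gamma=\omega_1+K$ for a large integer $K$, which forces the hypothesis for any cubic with $\omega_1-\textup{Re}(\omega_2)>2$ (your factorisation of $(\gamma-2)^2-|\omega_2+K|^2$ is correct, and $c-a\gtrsim k\epsilon b_0$ does make the first factor positive for $k$ large). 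The trade-off: your shift decouples the bi-Perron verification from the internals of the cubic construction, but it costs you a quantitative analysis of how $\eta$ distorts under $z\mapsto\bigl(z+\sqrt{z^2-4}\bigr)/2$, which you only sketch asymptotically as $O(K^{-2})$; the paper avoids this entirely by using the crude uniform bounds $|\textup{Re}(1/\alpha')|<1$ and $|\textup{Im}(1/\alpha')|<1$, which give the explicit estimate $\tan(\hat\eta)\leq(\omega_1-\textup{Re}(\omega_2)+1)/(|\textup{Im}(\omega_2)|-1)\leq 16\epsilon$ with no extra parameter. Your asymptotics are standard and the remaining bookkeeping you flag is indeed routine, so this is a valid alternative route; note only the small slip that the inequality $|\gamma'|\leq\gamma-2$ is required for conjugates $\gamma'\neq\gamma$, and that you should record explicitly that the constants implicit in your $O(K^{-2})$ terms depend only on the already-fixed $\omega_1,\omega_2$, so that $K$ can be chosen last.
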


\begin{proof}
Pick $\epsilon >0$. Let $\omega_1$ be the cubic Perron number constructed in Corollary \ref{cubic}, with Galois conjugates $w_2 = \overline{\omega_3}$. Recall that in the construction, one could take $a,b$ and $c$ to be arbitrary large. Therefore, we may assume that $b,c>2$ and $b > \epsilon^{-1}$. Throughout, when we refer to Step x, we mean Step x in the proof of Corollary \ref{cubic}. \\

By Step 2, $\omega_1 > c>2$, 
so the first condition of Observation \ref{perron-to-biperron} is satisfied. To prove the second condition, we want to show that it is possible to choose $a,b,$ and $c$ such that $ |\omega_2| \leq \omega_1 -2$. By the proof of Step 4, we have $ |\omega_2|^2 \leq a^2+b^2+ 1$. As we know that $c < \omega_1$, it is enough to prove that $a^2 + b^2 +1 \leq (c-2)^2$. In the proof of Step 1, we defined $c$ as the largest integer between $k \sqrt{a_0^2+b_0^2}$ and $k(a_0 + \epsilon \hspace{1mm} b_0)$. Moreover we had 
\[ k(a_0 + \epsilon \hspace{1mm} b_0) - k \sqrt{a_0^2+b_0^2} \geq c_0 + 4 \geq 4. \]
Therefore
\[ c \geq k \sqrt{a_0^2+b_0^2}+3 = \sqrt{a^2+b^2}+3.  \]
This implies 
\[ c-2 \geq \sqrt{a^2+b^2}+1 \implies (c-2)^2 \geq \bigg( \sqrt{a^2+b^2}+1 \bigg)^2 > a^2+b^2+1. \]
This shows that the hypotheses of Observation \ref{perron-to-biperron} are satisfied. Hence we may define the biPerron number $\alpha>1$ as the solution to  $\alpha+ \frac{1}{\alpha} = \omega_1$. The number $\alpha$ satisfies a monic integral polynomial equation of degree $6$, obtained by substituting $x = \alpha + \alpha^{-1}$ in the minimal polynomial of $\omega_1$, $f(x)$, and clearing the denominators. Hence the algebraic degree of $\alpha$ is at most $6$. In fact the degree can be taken to be equal to $6$ but we do not prove it. \\

The last step is to give a lower bound for the Perron-Frobenius degree of $\alpha$. Pick a Galois conjugate $\alpha' \neq \alpha$ of $\alpha$ such that $|\alpha'| \geq 1$. 
By Theorem \ref{lower-perron}, we have 
\[ d_{PF}(\alpha) \geq \frac{2 \pi}{3 \hat{\eta}}, \hspace{3mm} \hat{\eta} := \tan^{-1} \bigg( \frac{\alpha - \textup{Re}(\alpha')}{|\textup{Im}(\alpha')|} \bigg), \]
as long as $\hat{\eta} \leq 1$. We have 
\[ \alpha + \frac{1}{\alpha} = \omega_1 \implies \alpha = \omega_1 - \frac{1}{\alpha} <  \omega_1 , \]
since $\alpha>0$. Moreover 
\[\alpha' + \frac{1}{\alpha'} = \omega_2 \implies \textup{Re}(\alpha') = \textup{Re}(\omega_2 ) - \textup{Re}(\frac{1}{\alpha'}) \geq \textup{Re}(\omega_2) -1. \]
Here we have used the fact that $|\frac{1}{\alpha'}|\leq 1$, which implies that $|\textup{Re}(\frac{1}{\alpha'})| \leq 1$. Putting the two inequalities together, we obtain 
\[ \alpha - \textup{Re}(\alpha') \leq \omega_1 - \textup{Re}(\omega_2)+1.  \]
On the other hand, $\alpha$ is biPerron, so
\[ 0 < \alpha - |\alpha'| \leq \alpha - \textup{Re}(\alpha'). \]
Similarly 
\[ \alpha'+ \frac{1}{\alpha'} = \omega_2 \implies |\textup{Im}(\alpha')| = |\textup{Im}(\omega_2) - \textup{Im}(\frac{1}{\alpha'})| \geq |\textup{Im}(\omega_2)| - |\textup{Im}(\frac{1}{\alpha'})| \geq |\textup{Im}(\omega_2)|-1. \]
Now we can give an upper bound for $\tan(\hat{\eta})$:
\[ \tan(\hat{\eta}) = \bigg( \frac{\alpha - \textup{Re}(\alpha')}{|\textup{Im}(\alpha')|} \bigg) \leq \frac{\omega_1 - \textup{Re}(\omega_2)+1}{|\textup{Im}(\omega_2)|-1}.  \]
Using parts two and three of Step 6 together with Step 1, we obtain
\[ \tan(\hat{\eta}) \leq \frac{c-a+3}{\sqrt{b^2-1}-1} \leq \frac{\epsilon \hspace{1mm}b+3}{\sqrt{b^2-1}-1} = \frac{\epsilon+3 b^{-1} }{\sqrt{1 - b^{-2}}-b^{-1}}. \]
The condition $b > \epsilon^{-1}$ is equivalent to $b^{-1}<\epsilon$. We have
\[ \frac{\epsilon+3 b^{-1} }{\sqrt{1 - b^{-2}}-b^{-1}} \leq \frac{4 \epsilon}{\sqrt{1 - b^{-2}}-b^{-1}} \leq 16 \epsilon, \]
where the last inequality follows from $b \geq 2$. Therefore $\hat{\eta} \leq \tan^{-1}(16 \epsilon)$, which implies that 
\[ d_{PF}(\alpha) \geq \frac{2 \pi}{3 \hat{\eta}} \geq \frac{ 2 \pi}{3 \tan^{-1}(16 \epsilon)}. \]
By choosing $\epsilon >0$ to be arbitrary small, we obtain arbitrary large lower bounds for the Perron-Frobenius degree.
\end{proof}

\section{Questions}

Let $\lambda$ be the stretch factor of a pseudo-Anosov map $\phi$ on a closed orientable surface $S$. Then by Fried's theorem, $\lambda$ is biPerron. In fact one can construct a non-negative, integral, aperiodic matrix of size at most $6 |\chi(S)|$ and with spectral radius $\lambda$ using an invariant train track for the map $\phi$ \cite{penner1991bounds}. The author's initial motivation for studying the Perron-Frobenius degree of $\lambda$ was to control the genus of the underlying surface. Unfortunately our bound is not very effective for biPerron numbers coming from pseudo-Anosov maps, since `generic conjugacy class of pseudo-Anosov maps tend to have totally-real stretch factors' (see \cite{hamenstadt2018typical} or \cite[Appendix C5]{hubbard2016teichmuller} for a precise statement). This motivates the following question.

\begin{question}
Give an effective lower bound for the Perron-Frobenius degree of a, possibly totally-real, Perron number.
\end{question}

\begin{question}\

\begin{enumerate}
\item Are there pseudo-Anosov stretch factors with constant algebraic degree, and with arbitrary large Perron-Frobenius degree? In particular, can the biPerron numbers constructed in Corollary \ref{biperron} be realised as stretch factors? 
\item Fix a closed, orientable surface $S$. What is the set of possible Perron-Frobenius degrees of pseudo-Anosov maps on $S$? 
\end{enumerate}
\label{PF-degree-question}
\end{question}

Note a positive answer to Question (1) above, gives new counterexamples to a conjecture/question of Farb recently disproved by Leininger and Reid using methods from Teichm\"{u}ller theory \cite{leininger2017pseudo}.

\bibliographystyle{plain}
\bibliography{references}

\end{document}